\theoremstyle{plain}
\renewcommand{\theequation}{\arabic{section}.\arabic{equation}}
\renewcommand\thefigure{\thesection.\@arabic\c@figure}
\renewcommand{\thefigure}{\arabic{section}.\arabic{figure}}
\newtheorem{thm}{\bf Theorem}
\newenvironment{theorem}{\begin{thm}} {\end{thm}}
\newtheorem{cor}{\bf Corollary}
\newtheorem{prop}{Proposition}[section]
\newtheorem{lmm}{\bf Lemma}
\DeclareMathOperator{\sgn}{sign}
\theoremstyle{remark}
\newtheorem{rem}{\bf Remark}[section]
\theoremstyle{definition}
\numberwithin{table}{section}
\def \ri {{\rm i}}
\newcommand{\bs}[1]{\boldsymbol{#1}}
\def \rd {{\rm d}}
\def \R {{\mathbb R}}
\renewcommand \wedge \times
\newcommand{\comm}[1]{\marginpar{%
		\vskip-\baselineskip 
		\raggedright\footnotesize
		\itshape\hrule\smallskip#1\par\smallskip\hrule}}
\begin{document}
	\bibliographystyle{plain}
	\graphicspath{{./figures/}}

\title[FEM on nonuniform meshes for nonlocal Laplacian]
	 {FEM on nonuniform meshes for nonlocal Laplacian: Semi-analytic Implementation in One Dimension}
	\author[H. Chen, \; C. Sheng, \; and \; L. Wang]
    {\;\; Hongbin Chen${}^1$, \;\; Changtao Sheng${}^2$  \;\; and\;\; Li-Lian Wang${}^{3}$}
	
	\thanks{${}^1$College of Computer Science and Mathematics, Central South University of Forestry and Technology, Changsha, Hunan, 410004, China. The research of the author is partially supported by the Natural Science Foundation of Hunan Province (No: 2022JJ30996). Email: hongbinchen@csuft.edu.cn (H. Chen). \\
	 \indent ${}^2$Corresponding author. School of Mathematics, Shanghai University of Finance and Economics, Shanghai, 200433, China.  The research of the author is partially supported by the National Natural Science Foundation of China (Nos. 12201385 and 12271365) and the Fundamental Research Funds for the Central Universities (No: 2021110474). Email: ctsheng@sufe.edu.cn (C. Sheng). \\
         \indent ${}^{3}$Division of Mathematical Sciences, School of Physical and Mathematical Sciences, Nanyang Technological University (NTU), 637371, Singapore. The research of the author is partially supported by Singapore MOE AcRF Tier 2 Grants:  MOE2018-T2-1-059 and MOE2017-T2-2-144. Email: lilian@ntu.edu.sg (L.-L. Wang). \\
			}
	
\begin{abstract}
In this paper, we compute stiffness matrix of the nonlocal Laplacian 
discretized by the piecewise linear finite element on nonuniform meshes, 
and implement the FEM in the Fourier transformed domain. We derive useful integral expressions of  the entries that allow us to explicitly or semi-analytically evaluate the entries for various interaction kernels. Moreover,  the  limiting cases of the nonlocal stiffness matrix when the interactional radius  $\delta\rightarrow0$ or $\delta\rightarrow\infty$ automatically lead to integer and fractional FEM stiffness matrices, respectively, and the FEM discretisation is intrinsically compatible. We conduct ample numerical experiments to study and predict some of its properties and test  on different types of nonlocal problems.  To the best of our knowledge, such a semi-analytic approach has not been explored in literature even in the one-dimensional case. 
\end{abstract}
\keywords{Finite element method, nonlocal operator, nonuniform meshes, nonlocal stiffness matrix, nonlocal Allen-Cahn equation}
%
	
\maketitle


\section{Introduction}
We consider the following nonlocal constrained value problem on a nonzero measure volume:
\begin{equation}\label{1DNonL}
\begin{cases}
 \mathcal{L}_{\delta} u(x) = f(x),\quad & \text{on}\ \Omega = (a, b),\\[4pt]
u(x) = 0,\quad & \text{on}\ \Omega_{\mathcal{\delta}} = [a-\delta, a] \cup [b, b+\delta],
\end{cases}
\end{equation}
where the nonlocal operator is defined as
\begin{equation}\label{NonLOper}
\mathcal{L}_{\delta} u(x) = \frac12\int_{-\delta}^\delta (u(x+s)-u(x))\rho_{\delta}(|s|) \rm{d} s,
\end{equation}
where $\delta$ is called a horizon parameter and the kernel $\rho_{\delta}=\rho_{\delta}(s)$ is a nonnegative, radial function (i.e., $\rho_{\delta}(s)= \rho_{\delta}(|s|)$ for any $s$), compactly supported in $|s| \leq \delta $, and has a bounded second moment. It is known that nonlocal models \eqref{1DNonL}  provides an alternative way to modeling many phenomena and complex systems in diverse fields, for instance, the materials science with crack nucleation and growth, fracture, and failure of composites \cite{askari2008peridynamics}, the peridynamics model for mechanics 
\cite{Chen2011, Liu2017, Tian2013}, and nonlocal heat conduction \cite{Du2012}. We also refer to the up-to-date review paper \cite{Du2012, Du2023, Elia2020} and the monograph \cite{Du2019} for more detail.
%

There has been much interest in developing numerical algorithms for nonlocal models, particularly for PDEs involving the integral fractional Laplacian, which can be viewed as a limiting case of the nonlocal operator \cite{Elia2013, Tian2016}, i.e., $\delta\rightarrow\infty.$ The recent works on PDEs involving integral fractional Laplacian operator particularly include finite difference methods \cite{Duo2018, Hao2021, Huang2014}, finite element methods \cite{Acosta2017, Ainsworth2017}, and spectral methods \cite{Mao2017, Sheng2020, Tang2020}. The interested readers may also refer to \cite{Bonito2018, Elia2020, Lischke2020} for three up-to-date review papers. It should be noted that finite element method in frequency domain space for integral fractional Laplacian is provided in \cite{Chen2021non-uniform, liu2020diagonal, Sheng2023}, where the entries of the stiffness matrix can provide exact form in one dimension, and can be simply expressed as one-dimensional integral on a finite interval in two dimension. This fact encourages us to try to extend this method for more general problems involving the nonlocal operator \eqref{NonLOper}.

Besides, extensive numerical results have been conducted on various kinds nonlocal models.  Chen and Gunzburger \cite{Chen2011} provided a systematic study of algorithmic and implementation issues using continuous and discontinuous Galerkin finite element methods for peridynamics model. Tian and Du \cite{Tian2014} introduce a finite element and quadrature-based finite difference scheme, compare the similarities and differences of the nonlocal stiffness matrices for the two methods, and study fundamental numerical analysis issues. Wang and Tian \cite{ Wang2014} develop a fast and faithful collocation method for a two-dimensional nonlocal diffusion model. Liu, Cheng and Wang \cite{Liu2017} introduced and analyzed a fast Galerkin and adaptive Galerkin finite element methods to solve a steady-state peridynamic model. 
Additionally, other numerical methods for nonlocal models have been proposed, including the finite difference method \cite{Ye2023Monotone}, the finite element methods \cite{Klar2023,Aulisa2022Efficient}, the collocation methods \cite{Leng2021,Guan2022,Lu2022a,Cao2023}, and so on.
The development of the aforementioned methods relies on constructing specific quadrature formulas to discretize the underlying nonlocal operators, which unavoidably introduces truncation errors. 
However, these truncation errors often hinder our ability to analyze the attractive structure of the nonlocal stiffness matrices and affect the numerical stability of our solver on some extreme meshes, e.g., graded and geometric meshes. Recently, Li, Liu, and Wang \cite{Li2021} developed an efficient Hermite spectral-Galerkin method for nonlocal diffusion equations in unbounded domains, where the derivation therein was built upon the convolution property of the Hermite functions.

In this paper, we provide a semi-analytic means for computing the piecewise linear FEM stiffness matrix for nonlocal problem \eqref{1DNonL}. Then, we can obtain an explicit form of the one-dimensional integral for the entries of the stiffness matrix without generating any truncation errors. This one-dimensional integral can even be evaluated exactly for most of the kernel functions, such as the fractional kernel $\rho_\delta(|s|)=C_{\delta,\alpha}/s^{1+\alpha}$, with the normalized constant $C_{\delta,\alpha}$ and the index $\alpha\in [-1,2)$. Note that such an integral representation is derived from (i) implementation of FEM in the Fourier transformed domain, and (ii) use of some analytic formulas in the handbook \cite{AbramovitzS64, Gradshteyn2007Table} to evaluation of the inner integral. 
Benefit from this explicit expression, we show the two limiting cases of the nonlocal stiffness matrix: (a) when $\delta\rightarrow0$, it reduces to the stiffness matrix associated with the classical Laplacian operator $-\Delta$, see \eqref{limting0}, and (b) when $\delta\rightarrow \infty$, it becomes the stiffness matrix corresponding to the integral fractional Laplacian operator $(-\Delta)^{\frac{\alpha}{2}}$. As a byproduct, we numerically predicted the condition number of the nonlocal stiffness matrix $\bs{{S_{\delta}}}$ (even for some extreme mesh) behaves like $\text{Cond}(\bs{S_{\delta}})\leq\min\{\left(h_{\max } / h_{\min }\right)^{\alpha-1} N^{\alpha}\delta^{\alpha-2},\,\, \left(h_{\max } / h_{\min }\right)^{2} N^{2} \}$, where $h_{\max }$ and $h_{\min}$ denote the maximum and minimum of the mesh size, and $N$ denotes the number of meshes. Our approach is suitable for wider range of nonlocal interaction kernel on the uniform and nonuniform meshes. Numerical experiments show that our scheme is asymptotically compatible scheme \cite{Tian2014}. Note that we only focus on one dimensional problems in this work to better illustrate the idea. The multidimensional generalizations are possible and will be carried out in separate works.

The rest of this paper is organized as follows. In section 2, we present a piecewise linear finite element method on nonuniform and uniform meshes for model problem \eqref{1DNonL}, and derive the explicit integral form for the entries of the nonlocal stiffness matrix. In section 3, ample numerical results for nonlocal problems with various both regular and singular kernels, including accuracy testing, limiting behaviors, eigenvalue problem and nonlocal Allen-Cahn equation, are presented to show the efficiency and accuracy. Finally, we give some concluding remarks and discussions in the last section.

\section{Finite Element Method in Fourier Domain}
\setcounter{equation}{0}
\setcounter{lmm}{0}
\setcounter{thm}{0}

In this section, we compute the  nonlocal FEM stiffness matrix. To fix the idea, we focus on the piecewise linear FEM 
with the partition:
\begin{equation}\label{partition1}
\Omega_h:=\{x_j:a=x_0<x_1<\cdots<x_{N+1}=b\},
\end{equation}
and the piecewise linear finite element basis: 
\begin{equation}\label{NonuniformP}
\phi_{j} (x) =
\begin{dcases}
 \frac{ x-x_{j-1}}{h_j},\quad & x\in [x_{j-1},x_{j}],\\
 \frac{ x_{j+1} - x}{h_{j+1}},\quad & x\in (x_{j},x_{j+1}],\\
 0, \quad & \text{elsewhere on}\;\; \Omega\cup \overline \Omega_\delta,
\end{dcases} \; h_j=x_j-x_{j-1},
\end{equation}
for  $1\leq j\leq N.$ Correspondingly, we have  the FE approximation space:
 $$\mathcal{V}_h^{0}:={\rm span}\big\{\phi_j\,:\,1\leq j\leq N\big\}.$$
 Then the FE approximation to \eqref{1DNonL} is to  find $u_h \in \mathcal{V}_h^{0}$ such that
\begin{equation}\label{VariationalF}\begin{split}
\mathcal{A}_\delta(u_h, v_h)&=\frac12 \int_{\Omega}\int_{-\delta}^{\delta}(u_h(x+s)-u_h(x))(v_h(x+s)-v_h(x))\rho_{\delta}(|s|)\,\rm{d}s\rm{d}x \\
&= (I_hf,v_h)_{\Omega}, \quad \forall \,\upsilon_h \in \mathcal{V}_h^{0},
\end{split}\end{equation}
where $I_hf$ is the standard  piecewise linear interpolation of $f$ on $\Omega_h$.
We write 
$$u_h(x)=\sum\limits_{j=1}^N u_j\phi_j(x)\in \mathcal{V}_h^{0},$$
and obtain the following  linear system of \eqref{VariationalF}:
$$ \bs{S}_{\delta}\bs u=\bs f,$$
where $\bs{S}_{\delta}$ is the nonlocal FEM stiffness matrix with entries given by 
\begin{equation}\label{Sjkform}
(\bs{S}_{\delta})_{jk} =(\bs{S}_{\delta})_{kj}= 
\frac12 \int_{\Omega}\int_{-\delta}^{\delta}(\phi_j(x+s)-\phi_j(x))(\phi_k(x+s)-\phi_k(x))\rho_{\delta}(|s|)\,\rd s\rd x,  
\end{equation}
and 
$$\bs u=(u_1, u_2, \cdots, u_N)^\top,\;\;\;\bs f=(f_1, f_2, \cdots, f_N)^\top, \;\;\; f_k= (I_hf, \phi_k)_\Omega. $$

\subsection{FEM nonlocal stiffness matrix} 
Through the implementation in the Fourier transformed domain, we 
can derive the following exact integral formulas for computing the stiffness matrix. 
\begin{thm}\label{Non-uniformP} Let 
\begin{equation}\label{djcj}
d_j^k=|x_j-x_k|,\quad  \bs{c}_j:=\Big(\frac{1}{h_j}, -\frac{1}{h_j}-\frac{1}{h_{j+1}}, \frac{1}{h_{j+1}}\Big),
\end{equation}
and define 
\begin{equation}\label{gzcj}
g(z):=g(z; s):= -\frac{1}{12}( |z+s|^3 - 2|z|^3 +|z-s|^3).
\end{equation}
Then  the stiffness matrix can be evaluated by the component-wise integral
\begin{equation}\label{stiffenty} 
\begin{split}
\bs{S}_{\delta}= &
\int_{0}^{\delta} \bs{I}(s)\, \rho_{\delta}(s)\, \rd s, 
\end{split}
\end{equation}
where the matrix-valued function $\bs{I}(s)\in {\mathbb R}^{N\times N}$ with the entries given by 
\begin{equation}\label{Djk}
\begin{split}
\bs{I}_{jk}(s)=\bs{I}_{kj}(s)=\bs{c}_jg(\bs{D}_j^k)\bs{c}_k^{\top},\quad  \bs{D}_j^k:= \begin{pmatrix}
 d_{j-1}^{k-1}  &  d_{j-1}^{k}  &  d_{j-1}^{k+1}\\[2pt]
 d_{j}^{k-1}     &  d_{j}^{k}     &  d_{j}^{k+1}\\[2pt]
 d_{j+1}^{k-1} &  d_{j+1}^{k} &  d_{j+1}^{k+1}
  \end{pmatrix}.
\end{split}
\end{equation}
Here, $g( \bs{D}_j^k)$ means the function $g$ performs component-wisely upon the matrix $\bs{D}_j^k$. 
\end{thm}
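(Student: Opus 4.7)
The plan is to compute each entry of $\bs{S}_\delta$ by passing the inner $x$-integral to the Fourier domain. The starting observation is that the piecewise-linear hat function $\phi_j$ has distributional second derivative
\begin{equation*}
\phi_j''(x) \;=\; \tfrac{1}{h_j}\delta(x-x_{j-1}) - \Bigl(\tfrac{1}{h_j}+\tfrac{1}{h_{j+1}}\Bigr)\delta(x-x_j) + \tfrac{1}{h_{j+1}}\delta(x-x_{j+1}),
\end{equation*}
whose three weights are exactly the components of $\bs{c}_j$. Consequently $\widehat{\phi_j''}(\xi)=\sum_{a\in\{-1,0,1\}}(\bs{c}_j)_a\,e^{-\mathrm{i}\xi x_{j+a}}$ and $\widehat{\phi_j}(\xi)=-\widehat{\phi_j''}(\xi)/\xi^2$.

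Next, extending the basis functions by zero to $\mathbb{R}$ and applying Plancherel to the spatial integral gives, after using that $\phi_j(\cdot+s)-\phi_j$ has Fourier transform $(e^{\mathrm{i}\xi s}-1)\widehat{\phi_j}$ and therefore contributes the factor $|e^{\mathrm{i}\xi s}-1|^2=2-2\cos(\xi s)$,
\begin{equation*}
\int_{\mathbb{R}}(\phi_j(x+s)-\phi_j(x))(\phi_k(x+s)-\phi_k(x))\,\mathrm{d}x
\;=\; \sum_{a,b}(\bs{c}_j)_a(\bs{c}_k)_b\,K(x_{j+a}-x_{k+b};s),
\end{equation*}
with $K(d;s):=\tfrac{1}{2\pi}\int_{\mathbb{R}}\tfrac{2-2\cos(\xi s)}{\xi^4}\,e^{-\mathrm{i}\xi d}\,\mathrm{d}\xi$.

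The central analytic step is to identify $K(d;s)$. Writing $2-2\cos(\xi s)=2-e^{\mathrm{i}\xi s}-e^{-\mathrm{i}\xi s}$ splits $K$ into three integrals of the form $\frac{1}{2\pi}\int\xi^{-4}e^{-\mathrm{i}\xi t}\,\mathrm{d}\xi$ with $t\in\{d,\,d\pm s\}$; applying the regularized Fourier pair $\mathcal{F}^{-1}\{|\xi|^{-4}\}(t)=|t|^3/12$, a standard entry from tables such as Gradshteyn--Ryzhik, yields
\begin{equation*}
K(d;s) \;=\; -\tfrac{1}{12}\bigl(|d+s|^3 - 2|d|^3 + |d-s|^3\bigr) \;=\; g(|d|;s).
\end{equation*}
Recognizing the double sum as the matrix product $\bs{c}_j\,g(\bs{D}_j^k)\,\bs{c}_k^\top$ then produces exactly $\bs{I}_{jk}(s)$; the symmetry $\bs{I}_{jk}=\bs{I}_{kj}$ follows because $g$ depends on $d$ only through $|d|$. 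Finally, since $g(\cdot;s)$ is even in $s$, I would fold $\tfrac{1}{2}\int_{-\delta}^{\delta}\rho_\delta(|s|)\,\mathrm{d}s$ into $\int_0^\delta\rho_\delta(s)\,\mathrm{d}s$ to arrive at \eqref{stiffenty}.

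The main technical obstacle lies in the evaluation of $K(d;s)$: each piece $\int\xi^{-4}e^{-\mathrm{i}\xi t}\,\mathrm{d}\xi$ is individually divergent at $\xi=0$, so the identification must be justified either in the sense of tempered distributions or via an explicit regularization (truncate $\xi^{-4}$ to $|\xi|\geq\varepsilon$, evaluate, then send $\varepsilon\to 0$). Two cancellations make this legitimate: first, $2-2\cos(\xi s)=\xi^2 s^2+O(\xi^4)$ at the origin supplies an extra factor of $\xi^2$, so $K(d;s)$ is itself an absolutely convergent integral; second, $\sum_a(\bs{c}_j)_a=0$, a reflection of the compact support of $\phi_j$, makes $\widehat{\phi_j}$ regular at $\xi=0$ and guarantees that the three singular Fourier pieces combine into the convergent second difference of $|t|^3/12$. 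A secondary technicality is that Plancherel naturally produces $\int_{\mathbb{R}}$ while \eqref{Sjkform} writes $\int_\Omega$; for FE basis functions extended by zero under the volume-constrained convention, the two integrals agree, so the Fourier computation indeed delivers the claimed entries.
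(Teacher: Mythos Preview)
Your proposal is correct and follows the same overall strategy as the paper: extend the hat functions by zero, pass to Fourier via Plancherel, and reduce the stiffness entry to evaluating a one-dimensional $\xi$-integral weighted by $\xi^{-4}$. The symmetry and the folding of $\tfrac12\int_{-\delta}^{\delta}$ to $\int_0^\delta$ are handled exactly as in the paper.

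The one substantive difference is how the key integral is evaluated. You invoke the distributional Fourier pair $\mathcal{F}^{-1}\{|\xi|^{-4}\}=|t|^3/12$ and rely on the second-difference structure (in $t$) and the cancellation $\sum_a(\bs c_j)_a=0$ to justify it, flagging the need for a regularization argument. The paper instead stays entirely within classical analysis: after writing the integrand as $\xi^{-4}f_{jk}(\xi)$ with $f_{jk}(\xi)=\bs c_j\big[(1-\cos(\xi s))\cos(\bs D_j^k\xi)\big]\bs c_k^\top$, it integrates by parts three times using $f_{jk}^{(m)}(0)=0$ for $m=0,1,2,3$ to reach $\tfrac{1}{6}\int_0^\infty\xi^{-1}f_{jk}'''(\xi)\,\mathrm d\xi$, and then applies the Dirichlet sine integral $\int_0^\infty\tfrac{\sin(a\xi)}{\xi}\,\mathrm d\xi=\tfrac{\pi}{2}\operatorname{sgn}(a)$ term by term. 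This delivers $g(z;s)$ without ever splitting into individually divergent pieces, so no distributional interpretation or $\varepsilon$-regularization is needed. Your route is shorter once the Fourier pair is granted, while the paper's route is self-contained and sidesteps precisely the technical obstacle you identified.
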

\begin{proof}Let $\tilde{\phi}_j(x)$ be the zero extension of $\phi_j(x)$ on $\Omega\cup \overline\Omega_{\delta}$ to $\mathbb{R}$. Then by \cite[Lemma 2.1]{Chen2021non-uniform}, its Fourier transform is 
\begin{equation}\label{HY2.2}
\begin{aligned}
\mathscr{F}[\tilde{\phi}_{j}(x)](\xi) &= -\frac{1}{\sqrt{2\pi}}\frac{\beta_{j}e^{-\ri x_{j-1}\xi}-(\beta_{j}+ \beta_{j+1})e^{-\ri x_{j}\xi}+ \beta_{j+1}e^{-\ri x_{j+1}\xi}}{\xi^{2}}\\
&= -\frac{1}{\sqrt{2\pi} \xi^2} \bs c_j \cdot \bs e_j(\xi),
\end{aligned}
\end{equation}
 where 
 $$\beta_j=\frac{1}{h_j},\quad 
 \bs e_j(\xi):=\big(e^{-\ri x_{j-1}\xi}, e^{-\ri x_{j}\xi}, e^{-\ri x_{j+1}\xi}\big).$$ 
Using Parseval identity and the translation property: ${\mathscr F}[u(x+s)](\xi) = e^{\ri \xi s}{\mathscr F}[u(x)](\xi)$, we derive from \eqref{NonuniformP}, \eqref{Sjkform} and \eqref{HY2.2} that
\begin{equation}\label{AuvM}
\begin{split}
\mathcal{A}_\delta(\phi_j, \phi_k)
& = \frac12\int_{\Omega}\int_{-\delta}^{\delta}(\phi_j(x+s)-\phi_j(x))(\phi_k(x+s)-\phi_k(x))\rho_{\delta}(|s|)\rd s\rd x\\
& = \frac12\int_{\R}\int_{-\delta}^{\delta}(\tilde{\phi}_j(x+s)-\tilde{\phi}_j(x))(\tilde{\phi}_k(x+s)-\tilde{\phi}_k(x))\rho_{\delta}(|s|)\rd s\rd x \\
& = \frac12\int_{-\delta}^{\delta}\rho_{\delta}(|s|)\int_{\R}{\mathscr F}[\tilde{\phi}_j(x+s)-\tilde{\phi}_j(x)](\xi)\overline{{\mathscr F}[\tilde{\phi}_k(x+s)-\tilde{\phi}_k(x)](\xi)}\rd \xi\rd s \\
& = \frac12\int_{-\delta}^{\delta}\rho_{\delta}(|s|)\int_{\R}|e^{\ri \xi s}-1|^2{\mathscr F}[\tilde{\phi}_j(x)](\xi)\overline{{\mathscr F}[\tilde{\phi}_k(x)](\xi)}\rd \xi\rd s \\
& = \int_{-\delta}^{\delta}\rho_{\delta}(|s|)\int_{\R}(1-\cos(\xi s)){\mathscr F}[\tilde{\phi}_j(x)](\xi)\overline{{\mathscr F}[\tilde{\phi}_k(x)](\xi)}\rd \xi\rd s\\
& = 2\int_{0}^{\delta}\rho_{\delta}(s)\int_{\R}(1-\cos(\xi s)){\mathscr F}[\tilde{\phi}_j(x)](\xi)\overline{{\mathscr F}[\tilde{\phi}_k(x)](\xi)}\rd \xi\rd s.
\end{split}
\end{equation}
Using \eqref{HY2.2}, we obtain from  direct calculation that
\begin{equation}\label{AuvM1}
\begin{split}
 & \int_{\R} (1-\cos(\xi s)) {\mathscr F}[\tilde{\phi}_{j}(x)](\xi)\overline{{\mathscr F}[\tilde{\phi}_{k}(x)](\xi)} \rd \xi
 \\ \quad &= \frac{1}{2\pi} \bs c_j \Big(\int_{\mathbb R} {\xi}^{-4} (1-\cos(\xi s))\bs{e}_j(\xi) \cdot \bs{e}_k(-\xi) {\rm d}\xi \Big)\bs c_k^\top
  = \frac{1}{\pi}\int_{0}^{\infty}{\xi}^{-4} f_{jk}(\xi)\,{\rm d}\xi,
\end{split}
\end{equation}
where
\begin{equation}\label{nonuni1}
 f_{jk}(\xi) = \bs c_j \bs{F}_{jk}(\xi) \bs c_k^\top, \quad
 \bs{F}_{jk}(\xi)=(1-\cos(\xi s))\cos(\bs{D}_j^k\xi).
\end{equation}
Using the trigonometric identities
\begin{equation}\label{F11}
\begin{split}
 (1-\cos(\xi s))\cos(d_{j}^k\xi) &= \cos(d_{j}^k\xi)-\frac{1}{2}\big(\cos((d_{j}^k+s)\xi)+\cos((d_{j}^k-s)\xi)\big)\\
 &= \frac{1}{2}\big(2\cos(d_{j}^k\xi) -\cos(|d_{j}^k+s|\xi) -\cos(|d_{j}^k-s|\xi)\big),
\end{split}
\end{equation}
we obtain from   \eqref{Djk} and \eqref{nonuni1} readily that
\begin{equation}\label{f3rd}
\begin{split}
f'''_{jk}(\xi) = \bs c_j \bs{F}'''_{jk}(\xi) \bs c_k^\top
\quad {\rm and} \quad   \bs{F}'''_{jk}(\xi)= \frac{1}{2}g_1(\bs{D}_j^k;\xi),
\end{split}
\end{equation}
where we denote
$$g_1(z;\xi):=  2|z|^3\sin(|z|\xi) -|z+s|^3\sin(|z+s|\xi) -|z-s|^3\sin(|z-s|\xi).$$
One verifies readily that $\partial_\xi^k g_1(z;0)=f_{jk}^{(k)}(0)=0$ for $k=0,1,2,3.$ 
Thus, we derive from integration by parts that
\begin{equation}\label{I11}
\begin{split}
\int_{0}^{\infty}{\xi}^{-4} f_{jk}(\xi)\,{\rm d}\xi
&= \frac13\int_{0}^{\infty}{\xi}^{-3}f_{jk}'(\xi)\,{\rm d}\xi= \frac16\int_{0}^{\infty}{\xi}^{-2}f_{jk}''(\xi)\,{\rm d}\xi
\\&= \frac{1}{6}\int_{0}^{\infty}{\xi}^{-1}f_{jk}'''(\xi)\,{\rm d}\xi.
\end{split}
\end{equation}
%
%
Recall the sine integral formula (see \cite[P. 427]{Gradshteyn2007Table}):
\begin{equation}\label{Table}
\int_{0}^{\infty}\frac{\sin(ax)}{x}\,\rd x = \frac{\pi}{2}\sgn(a)=\frac \pi 2 \begin{cases}
-1,\;\;  & {\rm if}\;\;a<0,\\
0,\;\; & {\rm if}\;\;a=0,\\
1,\;\;  & {\rm if}\;\;a>0.
\end{cases}
\end{equation}
%
%
Inserting \eqref{f3rd} into \eqref{I11},  we find from \eqref{Table} immediately that
\begin{equation}\label{Isformula}
\int_{0}^{\infty}{\xi}^{-4} f_{jk}(\xi)\,{\rm d}\xi
 = \frac{1}{6}\int_{0}^{\infty}{\xi}^{-1}f_{jk}'''(\xi)\,{\rm d}\xi = \frac{\pi}{2}\bs c_j g(\bs{D}_j^k)\bs c_k^\top.
\end{equation}
Then we obtain from \eqref{AuvM}-\eqref{AuvM1} and \eqref{Isformula} that
\begin{equation}\label{abasis}
(\bs{S}_{\delta} )_{jk}=\frac{2}{\pi} \int_{0}^{\delta}\rho_{\delta}(s) \Big(\int_{0}^{\infty}{\xi}^{-4} f_{jk}(\xi)\,{\rm d}\xi \Big) \rd s= \bs{c}_j \Big(\int_{0}^{\delta} g(\bs{D}_j^k)\, \rho_{\delta}(s)\, \rd s\Big)\bs{c}_k^\top. 
\end{equation}
This completes the proof.
\end{proof}


Theorem \ref{Non-uniformP} is valid for the interaction  radius $\delta<\infty.$ The bandwidth of the stiffness matrix increases as $\delta/h$ increases, which becomes full as $\delta\ge b-a.$  
It is known that  the nonlocal operator 
\eqref{NonLOper} tend to integral fractional Laplacian 
as $\delta\to\infty$ 
when the corresponding kernel function is chosen to satisfy 
 \begin{equation}\label{fracker3}
 \rho_{\delta}(s)=\rho_{\infty}(s)\mathbf{\chi}_{(0,\delta)},
\end{equation}
where $\mathbf{\chi}_{(0,\delta)}$ denotes the characteristic (indicator) function of $(0,\delta)$ and
 \begin{equation}\label{fracker}
 \rho_{\infty}(s) = \frac{C_{\alpha}}{s^{1+\alpha}} \,\; \text{with} \,\; C_{\alpha}=\frac{2^{\alpha-1}\alpha\Gamma(\frac{1+\alpha}{2})}{\sqrt{\pi}\Gamma(1-\frac{\alpha}{2})},
\end{equation}
 is the fractional kernel (see \cite{Du2023, Elia2013, Tian2016}). 

With the interchange in the order of integration in the above proof, we can derive the nonlocal stiffness matrix for $\delta\to \infty.$ 
\begin{theorem}\label{cordeltainfty} 
Let $\alpha\in(0,2)$, and choose the kernel function \eqref{fracker3},
then the entries of the stiffness matrix $\bs{S}_{\infty}$ in  \eqref{stiffenty} reduces to 
\begin{equation}\label{HY2.3}
(\bs{S}_{\infty} )_{jk}= 
\begin{dcases}
\widehat{C}_{\alpha} \bs c_j\, (\bs{D}^{k}_{j})^{3-\alpha}\,\bs c_k^\top, \quad \alpha \not= 1, \\ 
\widehat{C}_{\alpha}\bs c_j\, (\bs{D}^{k}_{j})^2\ln{\bs{D}^{k}_{j}}\,\bs c_k^\top, \quad \alpha = 1, \\
\end{dcases}
\,\; \text{with} \,\; \widehat{C}_{\alpha}=\frac{1}{2\Gamma(4-\alpha)\cos(\alpha\pi/2)}, 
\end{equation}
where the notation $(\bs D_j^k)^{3-\alpha}$ and $(\bs{D}^{k}_{j})^2\ln{\bs{D}^{k}_{j}}$ performs component-wisely upon the matrix $\bs D_j^k$, and $\bs c_j$ and $\bs D_j^k$ are defined in \eqref{gzcj} and \eqref{Djk}, respectively.
\end{theorem}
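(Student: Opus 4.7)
The plan is to revisit the Fourier-space derivation in the proof of Theorem \ref{Non-uniformP} but interchange the order of the $s$- and $\xi$-integrations before specialising the kernel. Starting from line \eqref{AuvM} with $\rho_\delta(s)=C_\alpha s^{-1-\alpha}\chi_{(0,\delta)}$ and letting $\delta\to\infty$, Fubini places the $s$-integral innermost; the classical scaling identity
\[
2C_\alpha\int_0^\infty\frac{1-\cos(\xi s)}{s^{1+\alpha}}\,\mathrm{d}s=|\xi|^\alpha\qquad(0<\alpha<2),
\]
which is precisely why $C_\alpha$ in \eqref{fracker} is chosen as stated, then collapses the bilinear form to the standard Parseval expression $\mathcal A_\infty(\phi_j,\phi_k)=\int_\R|\xi|^\alpha\mathscr F[\tilde\phi_j]\overline{\mathscr F[\tilde\phi_k]}\,\mathrm{d}\xi$ associated with the integral fractional Laplacian.

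Inserting the Fourier formula \eqref{HY2.2} and symmetrising $\xi\mapsto-\xi$ exactly as between \eqref{AuvM1} and \eqref{nonuni1}, the entries reduce to
\[
(\bs S_\infty)_{jk}=\frac{1}{\pi}\,\bs c_j\Big(\int_0^\infty\xi^{\alpha-4}\cos(\bs D_j^k\xi)\,\mathrm{d}\xi\Big)\bs c_k^\top,
\]
where the cosine acts componentwise on $\bs D_j^k$. The integrand looks singular at $\xi=0$, but the Taylor expansion $\cos(d\xi)=1-\tfrac12 d^2\xi^2+O(\xi^4)$ together with $\bs c_j\cdot\bs 1=0$ and $\bs c_j\cdot(x_{j-1},x_j,x_{j+1})^\top=0$ forces $\bs c_j(1)_{l,m}\bs c_k^\top=\bs c_j(d^2)_{l,m}\bs c_k^\top=0$; so the low-frequency modes $\xi^{\alpha-4}$ and $\xi^{\alpha-2}$ are eliminated by the $\bs c_j,\bs c_k$ factors, and each componentwise integral can be treated in the finite-part sense.

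For $\alpha\neq 1$ I then apply the analytic-continuation Mellin identity $\int_0^\infty\xi^{\alpha-4}\cos(d\xi)\,\mathrm{d}\xi=\Gamma(\alpha-3)\cos\big((\alpha-3)\pi/2\big)\,d^{3-\alpha}$, and simplify the prefactor using $\Gamma(\alpha-3)=\Gamma(\alpha+1)/[\alpha(\alpha-1)(\alpha-2)(\alpha-3)]$, $\cos\big((\alpha-3)\pi/2\big)=-\sin(\alpha\pi/2)$, the relation $\Gamma(4-\alpha)=(3-\alpha)(2-\alpha)(1-\alpha)\Gamma(1-\alpha)$, and the reflection/duplication identities that collapse $C_\alpha$ to $\Gamma(\alpha+1)\sin(\alpha\pi/2)/\pi$; routine manipulation yields the overall constant $\widehat C_\alpha=1/[2\Gamma(4-\alpha)\cos(\alpha\pi/2)]$, giving the first branch of \eqref{HY2.3}.

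The main obstacle is the degenerate case $\alpha=1$, where $\cos(\alpha\pi/2)$ vanishes in the prefactor while $\bs c_j d^{3-\alpha}\bs c_k^\top$ simultaneously collapses to $\bs c_j d^2\bs c_k^\top=0$; the resulting $0/0$ is resolved by differentiating in $\alpha$, using $\cos(\alpha\pi/2)=-\sin((\alpha-1)\pi/2)\sim -(\alpha-1)\pi/2$ and $d^{3-\alpha}-d^2=-d^2(\alpha-1)\ln d+O((\alpha-1)^2)$, to obtain
\[
\lim_{\alpha\to 1}\frac{\bs c_j\, d^{3-\alpha}\,\bs c_k^\top}{2\Gamma(4-\alpha)\cos(\alpha\pi/2)}=\frac{1}{2\pi}\,\bs c_j\,(d^2\ln d)\,\bs c_k^\top,
\]
which matches the second branch of \eqref{HY2.3} when $\widehat C_\alpha$ is read as this limit at $\alpha=1$. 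Equivalently, one can redo the Mellin step directly at $\alpha=1$, where the finite part of $\int_0^\infty\xi^{-3}\cos(d\xi)\,\mathrm{d}\xi$ contributes a $d^2\ln d$ piece plus a polynomial tail in $d$ that is killed by $\bs c_j\bs c_k^\top$ exactly as above.
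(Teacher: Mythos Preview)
Your proposal is correct and begins identically to the paper's proof: interchange the $s$- and $\xi$-integrations in \eqref{AuvM} and apply the scaling identity \eqref{cosintgral2} to arrive at the Parseval representation $(\bs S_\infty)_{jk}=\int_{\mathbb R}|\xi|^\alpha\mathscr F[\tilde\phi_j]\overline{\mathscr F[\tilde\phi_k]}\,\mathrm d\xi$. At that point the paper simply invokes \cite[Theorem~1]{Chen2021non-uniform} for the closed form \eqref{HY2.3}, whereas you reproduce that computation explicitly: you insert \eqref{HY2.2}, use the cancellations $\bs c_j\cdot\bs 1=0$ and $\bs c_j(d^2)\bs c_k^\top=0$ to justify the finite-part Mellin evaluation, and then simplify the constant via the reflection formula. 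Your treatment of $\alpha=1$ by differentiating in $\alpha$ (L'H\^opital) is not in the paper's proof at all---the paper again relies on the cited reference---and correctly yields the coefficient $1/(2\pi)$, which is indeed how $\widehat C_\alpha$ must be read at $\alpha=1$. So the route is the same, but your argument is self-contained where the paper's is not.
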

\begin{proof}
Note that when $\delta\to\infty$, we have  $\rho_{\delta}(s)=\rho_{\infty}(s)$ is independent of 
 $\delta$.
Recall the integral formula (cf. \cite[(3.12)]{Di2012}): for any $ \xi\in\mathbb{R}$, we have
\begin{equation}\label{cosintgral2}
\int_{\mathbb{R}} \frac{1-\cos (\xi s)}{|s|^{1+\alpha}}\, \rd s = C_{ \alpha}^{-1}|\xi|^{\alpha}. 
\end{equation}
We change the order of integration in the last equality of \eqref{AuvM}, 
and  derive from 
\eqref{cosintgral2} that
\begin{equation}\label{AuvMnew}
\begin{split}
(\bs{S}_{\infty} )_{jk}&
= 2C_\alpha\int_{0}^{\infty}s^{-1-\alpha}\int_{\R}(1-\cos(\xi s)){\mathscr F}[\tilde{\phi}_j(x)](\xi)\overline{{\mathscr F}[\tilde{\phi}_k(x)](\xi)}\,\rd \xi\rd s
\\& = C_\alpha\int_{\R}\Big(\int_{\R}\frac{1-\cos(\xi s)}{|s|^{1+\alpha}} \,\rd s\Big){\mathscr F}[\tilde{\phi}_j(x)](\xi)\overline{{\mathscr F}[\tilde{\phi}_k(x)](\xi)}\,\rd \xi
\\& =\int_{\R}|\xi|^{\alpha}{\mathscr F}[\tilde{\phi}_j(x)](\xi)\overline{{\mathscr F}[\tilde{\phi}_k(x)](\xi)}\,\rd \xi,
\end{split}
\end{equation}
which is identical to the FEM stiffness matrix derived in 
\cite[Theorem 1]{Chen2021non-uniform} with the formula given in \eqref{cosintgral2}.
\end{proof}

The above explicit formula allows for the exact evaluation or accurate  computation of the stiffness matrix for general interaction kernel function $\rho_\delta(s),$ and  
study the intrinsic properties of this matrix. 

\subsection{FEM nonlocal stiffness matrix with $\delta\le h$}
To study the compatibility to the usual FEM stiffness matrix, when $\delta \to 0$, we consider the special case with 
\begin{equation}\label{fracker0}
\delta\leq h=\min\limits_{1 \leq j \leq N}\{h_j\}; \quad   \rho_{\delta}(s) = \frac{C_{\delta, \alpha}}{s^{1+\alpha}}, \quad 
 C_{\delta, \alpha}:=\frac{2-\alpha}{\delta^{2-\alpha}},\;\;\; 
 \alpha\in [-1,2),
\end{equation}
where $C_{\delta, \alpha}$ is chosen so that it satisfies the second moment condition: $\int_0^\delta s^2 \rho_{\delta}(s) {\rd} s=1.$
We derive from the formulas in  Theorem \ref{Non-uniformP} straightforwardly  the following interesting identity, which leads to the usual FEM stiffness matrix when $\delta\to 0.$  
\begin{cor}\label{cordelta0}  
Under \eqref{fracker0},  we have the identity
\begin{equation}\label{limting0}
  \bs{S}_{\delta} = \bs{S}_0 -c_\alpha \delta \bs{S}_0^2,\quad  c_\alpha:=\frac{2-\alpha}{6(3-\alpha)},
  \end{equation}
where $\bs{S}_0$ is the usual FEM  stiffness matrix for $-u''(x)=f(x)$ on $(a, b)$ with $u(a)=u(b)=0,$ that is,  
$$\bs{S}_0={\rm diag}\Big(\!\!-\frac 1 {h_j}, \frac 1 {h_j}+\frac 1 {h_{j+1}},-\frac 1 {h_{j+1}}\Big).$$ 
Consequently, we have $\lim\limits_{\delta\to 0^+}\bs{S}_{\delta}=\bs{S}_0.$
\end{cor}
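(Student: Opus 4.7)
The plan is to exploit the smallness hypothesis $\delta \le h$ to reduce the cubic function $g(z;s)$ from Theorem \ref{Non-uniformP} to an elementary closed form on every entry of $\bs{D}_j^k$. For any $s \in (0,\delta)$ and any entry $d_i^l = |x_i - x_l|$, either $d_i^l = 0$ (when $i = l$) or $d_i^l \ge h \ge \delta > s$. In the latter case the identity $(|z|+s)^3 + (|z|-s)^3 = 2|z|^3 + 6|z|s^2$ gives $g(d_i^l;s) = -\tfrac{s^2}{2}\, d_i^l$, whereas $g(0;s) = -\tfrac{s^3}{6}$ directly. This yields the entrywise decomposition
\[
g(\bs{D}_j^k; s) \;=\; -\tfrac{s^2}{2}\,\bs{D}_j^k \;-\; \tfrac{s^3}{6}\,\bs{E}_j^k,
\]
where $\bs{E}_j^k$ is the indicator matrix with $(\bs{E}_j^k)_{il} = 1$ if $i = l$ and zero otherwise.

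Substituting this decomposition into \eqref{stiffenty} with $\rho_\delta(s) = (2-\alpha)\delta^{\alpha-2}/s^{1+\alpha}$ reduces the problem to evaluating two scalar moments. The second-moment normalisation built into $C_{\delta,\alpha}$ forces $\int_0^\delta s^2 \rho_\delta(s)\,\rd s = 1$, while a direct computation gives $\int_0^\delta s^3 \rho_\delta(s)\,\rd s = (2-\alpha)\delta/(3-\alpha) = 6 c_\alpha \delta$. Therefore
\[
(\bs{S}_\delta)_{jk} \;=\; -\tfrac{1}{2}\,\bs{c}_j\, \bs{D}_j^k\, \bs{c}_k^\top \;-\; c_\alpha \delta\, \bs{c}_j\, \bs{E}_j^k\, \bs{c}_k^\top.
\]

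It remains to identify the two algebraic quantities on the right as $(\bs{S}_0)_{jk}$ and $c_\alpha \delta\,(\bs{S}_0^2)_{jk}$. For the first, the relation $\sum_i c_i^{(j)} = 0$ immediately yields $\bs{c}_j \bs{D}_j^k \bs{c}_k^\top = 0$ for $|j-k|\ge 3$, since in that regime every $d_i^l$ can be written without absolute values as $x_l - x_i$; the cases $k \in \{j, j\pm 1, j\pm 2\}$ are then each verified by a short $3\times 3$ multiplication that returns exactly $-2(\bs{S}_0)_{jk}$. For the second, I would use that $\bs{c}_j$ encodes the distributional identity $\phi_j'' = \sum_i c_i^{(j)}\delta_{x_i}$; integration by parts together with the nodal property $\phi_j(x_m) = \delta_{jm}$ gives $(\bs{S}_0)_{jm} = -c_j^{(m)}$ whenever $m \in \{j-1,j,j+1\}$, and the symmetry $c_i^{(j)} = c_j^{(i)}$ (coming from $(\bs{S}_0)_{jm}=(\bs{S}_0)_{mj}$) then converts $\bs{c}_j \bs{E}_j^k \bs{c}_k^\top = \sum_m c_m^{(j)} c_m^{(k)}$ into $(\bs{S}_0^2)_{jk}$. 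Combining the two identifications yields $\bs{S}_\delta = \bs{S}_0 - c_\alpha\delta\, \bs{S}_0^2$, and sending $\delta \to 0^+$ produces the desired limit.

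The main obstacle I anticipate is the case analysis for $|j-k|\le 2$: the absolute values $|x_i - x_l|$ populating $\bs{D}_j^k$ prevent any single telescoping argument, so the diagonal, sub/super-diagonal, and the $|j-k|=2$ patterns must each be treated separately by a direct $3\times 3$ multiplication. Once these short calculations are carried out, the remainder of the proof is purely algebraic.
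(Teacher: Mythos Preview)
Your proposal is correct and takes a somewhat more structured route than the paper. The paper proceeds by pure brute force: it evaluates $\bs{I}_{jk}(s)=\bs{c}_j\, g(\bs{D}_j^k)\,\bs{c}_k^\top$ separately for each of the cases $|j-k|=0,1,2$ and $|j-k|\ge 3$, obtaining in every case a polynomial $a s^2+b s^3$, then integrates against $\rho_\delta$ and checks entry-by-entry that the outcome matches $(\bs{S}_0-c_\alpha\delta\,\bs{S}_0^2)_{jk}$. Your decomposition $g(\bs{D}_j^k;s)=-\tfrac{s^2}{2}\bs{D}_j^k-\tfrac{s^3}{6}\bs{E}_j^k$ separates the $s^2$ and $s^3$ contributions \emph{before} any case work, and your identification of $\bs{c}_j\bs{E}_j^k\bs{c}_k^\top$ with $(\bs{S}_0^2)_{jk}$ via the relation $(\bs{S}_0)_{jm}=-c_m^{(j)}$ is genuinely more conceptual: it explains \emph{why} the correction term is the square of $\bs{S}_0$ rather than merely verifying it. You still pay the same five-case check for $-\tfrac12\bs{c}_j\bs{D}_j^k\bs{c}_k^\top=(\bs{S}_0)_{jk}$ that you flag at the end, so the total computational labour is comparable; what your route buys is a cleaner structural picture of the identity.
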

\begin{proof}
As $s\le \delta\le h,$ we can simplify $g(z;s)$ in \eqref{gzcj} and evaluate \eqref{Djk} directly to derive
\begin{equation*}\label{Ijks1}
\begin{split}
\bs{I}_{jk}(s) &=\bs c_j g(\bs{D}_j^k)\bs c_k^\top \\
 & =\begin{dcases}
-2\Big(\frac{1}{h_j^2} +\Big(\frac{1}{h_j}+\frac{1}{h_{j+1}}\Big)^2 +\frac{1}{h_{j+1}^2}\Big)s^3 +12\Big(\frac{1}{h_j}+\frac{1}{h_{j+1}}\Big)s^2,\;\;  &  j=k,\\[1pt]
\frac{2}{h_{j+1}}\Big(\frac{1}{h_j}+\frac{2}{h_{j+1}}+\frac{1}{h_{j+2}}\Big)s^3 -\frac{12}{h_{j+1}}s^2,\;\;  & j=k -1,\\[1pt]
 \frac{2}{h_{j}}\Big(\frac{1}{h_{j-1}}+\frac{2}{h_j}+\frac{1}{h_{j+1}}\Big)s^3 -\frac{12}{h_j}s^2,\;\;  &  j=k +1,\\[1pt]
-\frac{2}{h_{j+1}h_{j+2}}s^3,\;\;  & j=k -2,\\[1pt]
-\frac{2}{h_{j-1}h_{j}}s^3,\;\;    &  j=k +2,\\[1pt]
0,\;\;  & {\rm otherwise}.
\end{dcases}
\end{split}
\end{equation*}
Thus for given $\rho_\delta(s)$ in \eqref{fracker0}, we find
\begin{equation}\label{Sjk2}
\begin{split}
(\bs{S}_{\delta} )_{jk}& = \int_{0}^{\delta}\bs{I}_{jk}(s)\, \rho_{\delta}(s)\, \rd s \\
 & =  \begin{dcases}
-c_\alpha \Big(\frac{1}{h_j^2} +\Big(\frac{1}{h_j}+\frac{1}{h_{j+1}}\Big)^2 +\frac{1}{h_{j+1}^2}\Big) \delta+\frac{1}{h_j}+\frac{1}{h_{j+1}},\;\;  &  j=k,\\[1pt]
c_\alpha\Big(\frac{1}{h_jh_{j+1}}+\frac{2}{h^2_{j+1}}+\frac{1}{h_{j+1}h_{j+2}}\Big) \delta-\frac{1}{h_{j+1}},\;\;  & j=k -1,\\[1pt]
c_\alpha\Big(\frac{1}{h_{j-1}h_j}+\frac{2}{h^2_j}+\frac{1}{h_jh_{j+1}}\Big) \delta -\frac{1}{h_j},\;\;   & j=k +1,\\[1pt]
 -c_\alpha\frac{1}{h_{j+1}h_{j+2}} \delta,\;\;  & j=k -2,\\[1pt]
 -c_\alpha\frac{1}{h_{j-1}h_{j}} \delta,\;\;  & j=k +2,\\[1pt]
 0,\;\;  & {\rm otherwise},
\end{dcases}\\
&=(\bs S_0)_{jk}-c_\alpha \delta (\bs S_0^2)_{jk},
\end{split}
\end{equation}
by direct calculaton, we can obtain the identity \eqref{limting0}. 
\end{proof}

\begin{rem}\label{Rmk:delta} {\em The compatibility  of the discretisation is essential for the numerical solution of nonlocal models. With the evaluation of the stiffness matrix by the analytic formulas, the FE scheme is automatically compatible \cite{Tian2014, Tian2016}.} 
\end{rem}

\subsection{Nonlocal FEM stiffness matrix on uniform meshes}
As a special case of Theorem \ref{Non-uniformP}, the nonlocal stiffness matrix of FEM on uniform meshes reduces to a symmetric Toeplitz matrix. 
\begin{thm}\label{uniformgrids}  
Given a uniform  partition   $\{x_j=a+jh\}_{j=0}^{N+1}$ with $h=(b-a)/(N+1)$, the nonlocal stiffness matrix $\bs{S}_{\delta}$ 
is a symmetric Toeplitz matrix of the form
\begin{equation}\label{StiffMatrix}
\bs{S}_{\delta}=
\begin{bmatrix}
   t_{0} &\hspace{-4pt} t_{1} & t_2 & \cdots & t_{N-3}& t_{N-2} & t_{N-1} \\[1pt]
   t_{1} &\hspace{-4pt} t_{0} & t_{1} &\hspace{-4pt} \ddots & \cdots & t_{N-3}  & t_{N-2}\\[-1pt]
   t_{2} &\hspace{-4pt} t_{1} & t_{0} & \ddots & \hspace{-4pt}\ddots &\vdots   & t_{N-3} \\[0pt]
   \vdots& \hspace{-4pt}\ddots& \hspace{-4pt}\ddots & \ddots   & \hspace{-4pt}\ddots &\hspace{-4pt} \ddots & \vdots \\[2pt]
   t_{N-3}   &  \vdots   &\hspace{-4pt} \ddots   &\hspace{-4pt} \ddots  &\hspace{8pt} t_0 &t_1  & t_2   \\[-2pt]
   t_{N-2}  & t_{N-3} & \cdots   &\hspace{-4pt}\ddots &\hspace{8pt}  t_{1}  & t_{0} & t_{1} \\[3pt]
   t_{N-1}  & t_{N-2} & t_{N-3}  & \cdots &\hspace{8pt} t_2  & t_{1} & t_{0}
  \end{bmatrix},
\end{equation}
i.e., $\{(\bs{S}_{\delta})_{jk}=t_{|j-k|}\}_{j,k=1}^N.$ Here, the
generating vector 
$\bs t=(t_0, t_1, \cdots, t_{N-1})$ can be evaluated by
\begin{equation}\label{gpa00}
 t_p= \begin{dcases}  h^2\int_{0}^{\frac{\delta}{h}}{\mathbb I}_p(\tau) \, \rho_{\delta}(\tau h)\, \rd \tau,\quad & 0\le p< \delta/h+2,\\
 0,\quad & p\ge \delta/h+2,
\end{dcases}\end{equation}
where $p=|j-k|$ and
\begin{equation}\label{stiffentyU}
\begin{split}
\mathbb{I}_p(\tau) := - \frac{1}{12} \sum\limits_{i=-2}^2 \eta_i \big\{|p+i+\tau|^3 -2|p+i|^3 +|p+i-\tau|^3\big\},
\end{split}
\end{equation}
with the constants $\eta_0=6$, $ \eta_{\pm 1}=-4$ and $\eta_{\pm 2}=1.$ 
\end{thm}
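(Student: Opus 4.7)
The plan is to specialize Theorem~\ref{Non-uniformP} to the uniform mesh $h_j \equiv h$ and exploit translation invariance. Two observations drive everything. First, the vector $\bs c_j$ in \eqref{djcj} reduces to $h^{-1}(1,-2,1)$, independent of $j$. Second, the entries of $\bs D_j^k$ satisfy $d_{j+a}^{k+b} = h|(j-k)+(a-b)|$ for $a,b\in\{-1,0,1\}$, so the matrix $\bs D_j^k$ depends only on $p:=|j-k|$ through the five shifts $i=a-b \in \{-2,-1,0,1,2\}$. Consequently $(\bs S_\delta)_{jk}$ depends only on $p$, which is precisely the symmetric Toeplitz form \eqref{StiffMatrix}.

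Next I would reduce the quadratic form $\bs c_j g(\bs D_j^k;s)\bs c_k^\top$ to a one-index sum. Writing $(c_{-1},c_0,c_1)=(1,-2,1)$ and using the evenness $g(-z;s)=g(z;s)$ to drop the absolute value on the argument, expand
\[
\bs c_j g(\bs D_j^k;s)\bs c_k^\top \;=\; \frac{1}{h^2}\sum_{a,b \in \{-1,0,1\}} c_a c_b\, g\bigl(h(p+a-b);s\bigr).
\]
Grouping by $i=a-b$, the weight attached to $g(h(p+i);s)$ is the discrete autocorrelation of $(1,-2,1)$, namely $(\eta_{-2},\eta_{-1},\eta_0,\eta_1,\eta_2)=(1,-4,6,-4,1)$. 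The homogeneity $g(hz;h\tau)=h^3 g(z;\tau)$ together with the change of variables $\tau=s/h$ (so $\rd s = h\,\rd \tau$) then collapses the integrand in \eqref{stiffenty} to $h\cdot\mathbb{I}_p(\tau)$, and an extra factor of $h$ from the measure yields \eqref{gpa00} on the range $0\le p<\delta/h+2$.

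The one genuine calculation is the bandwidth claim $t_p = 0$ for $p \geq \delta/h + 2$. For such $p$ and any $\tau \in [0,\delta/h]$, every argument $p+i\pm \tau$ with $|i|\le 2$ is nonnegative, so the absolute values drop and the elementary identity $(z+\tau)^3+(z-\tau)^3-2z^3 = 6z\tau^2$ gives
\[
\mathbb{I}_p(\tau) \;=\; -\frac{\tau^2}{2}\sum_{i=-2}^{2} \eta_i (p+i).
\]
Since $\sum_i \eta_i = 1-4+6-4+1 = 0$ and $\sum_i i\,\eta_i = -2+4-4+2 = 0$ (both immediate from $(1,-4,6,-4,1)$), the right-hand side vanishes identically in $p$, forcing $\mathbb{I}_p(\tau)\equiv 0$ on the whole integration interval and hence $t_p=0$. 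The main obstacle is the bookkeeping in the middle step---correctly identifying the five weights $\eta_i$ as the autocorrelation of the stencil $(1,-2,1)$ and tracking the scaling in $h$---whereas the bandwidth step reduces to the two moment identities $\sum \eta_i = \sum i\eta_i = 0$.
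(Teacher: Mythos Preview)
Your proposal is correct and follows essentially the same route as the paper: specialize Theorem~\ref{Non-uniformP} to $h_j\equiv h$, use evenness of $g$ in $z$ together with the symmetry $\eta_i=\eta_{-i}$ to see that $\bs c_j g(\bs D_j^k)\bs c_k^\top$ depends only on $p=|j-k|$, and then verify the bandwidth by dropping the absolute values for $p\ge \delta/h+2$. Your packaging of the five weights $(1,-4,6,-4,1)$ as the autocorrelation of $(1,-2,1)$ and of the vanishing as the moment identities $\sum_i\eta_i=\sum_i i\eta_i=0$ is a nice way to say what the paper does by direct expansion, but the underlying computation is the same.
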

\begin{proof} 
 In this case, the entries in \eqref{Djk} can be simplified into 
\begin{equation*}
\begin{split}
{\bs I}_{jk}(s) &={\bs I}_{kj}(s)=\bs c_j g(\bs{D}_j^k)\bs c_k^\top \\
& = \frac{1}{h^2}\big(1, -2, 1\big) \begin{pmatrix}
 g(|j-k|h)     &  g(|j-k-1|h)  &  g(|j-k-2|h) \\[2pt]
 g(|j-k+1|h)   &  g(|j-k|h)    &  g(|j-k-1|h) \\[2pt]
 g(|j-k+2|h)   &  g(|j-k+1|h)  &  g(|j-k|h)
  \end{pmatrix} \big(1, -2, 1\big)^\top \\
& = \frac{1}{h^2}\sum\limits_{i=-2}^2 \eta_i g(|j-k+i|h) 
 = \frac{1}{h^2}\sum\limits_{i=-2}^2 \eta_i g((j-k+i)h)\\
 & = \frac{1}{h^2}\sum\limits_{i=-2}^2 \eta_i g((|j-k|+i)h),
\end{split}
\end{equation*}
where we observed from the definition \eqref{gzcj} that $g$  is an even function in $z$ for fixed $s\ge 0,$ and also noted that the symmetric properties of  coefficients $\{\eta_i\}$ and summation in $i.$ 
Thus we can denote 
\begin{equation*}
\begin{split}
\bs{I}_{jk}(s) &=  \frac{1}{h^2}\sum\limits_{i=-2}^2 \eta_i g((p+i)h):={\mathbb I}_p(s)={\mathbb I}_{|j-k|}(s),
\end{split}
\end{equation*}
which implies the entries on each diagonal are the same, i.e., $\bs{I}(s)$ is a symmetric Toeplitz matrix, so is $\bs S_\delta.$ 
The expression \eqref{gpa00}-\eqref{stiffentyU} is a direct consequence of \eqref{gzcj}-\eqref{stiffenty}.

If $p\ge \delta/h+2,$ we find from \eqref{stiffentyU} that for $\tau\in [0,\delta/h],$
\begin{equation*}
\begin{split}
{\mathbb I}_p(\tau) & = - \frac{1}{12} \sum\limits_{i=-2}^2 \eta_i \big\{|p+i+\tau|^3 -2|p+i|^3 +|p+i-\tau|^3\big\}\\
&= - \frac{1}{12} \sum\limits_{i=-2}^2 \eta_i \big\{(p+i+\tau)^3 -2(p+i)^3 +(p+i-\tau)^3\big\}=0.
\end{split}
\end{equation*}
This completes the proof.
\end{proof}

\begin{rem}\label{Iptaucomp} \emph{For any $\tau\ge 0$ and fixed integer $p\ge 0,$  ${\mathbb I}_p(\tau)$ in
 \eqref{stiffentyU} has the following piecewise representation:
\begin{itemize}
\item[(i)] If $p=0,1,$ then we have
\begin{equation}\label{Ip=0}
{\mathbb I}_{0}(\tau) =
  \begin{cases}
  12  (2-\tau)\tau^2,\quad &   \tau\in [0,1),\\[1pt]
16+4(\tau-2)^3, \; &    \tau\in [1,2),\\[1pt]
16, \quad & \tau\ge 2,
\end{cases}
\end{equation}
and
\begin{equation}\label{Ip=1}
 {\mathbb I}_{1}(\tau)  =
  \begin{cases}
  -4(3-2\tau)\tau^2,\quad &  \tau\in [0,1),\\[1pt]
  14-42\tau+30\tau^2-6\tau^3, \;\; &  \tau\in [1,2),\\[1pt]
4+2(\tau-3)^3, \quad &    \tau\in [2,3),\\[1pt]
  4, \quad &   \tau\ge 3.
\end{cases}
\end{equation}
\item[(ii)] If $p \geq 2,$ then we have
\begin{equation}\label{Ipge2}
 {\mathbb I}_{p}(\tau) =
  \begin{cases}
  2(p-2-\tau)^3,\quad &   \tau\in [p-2,\, p-1),\\[1pt]
  2(p-2-\tau)^3-8(p-1-\tau)^3,\;\; &  \tau\in [p-1,\,p),\\[1pt]
  8(p+1-\tau)^3-2(p+2-\tau)^3,\; &  \tau\in [p,\,p+1),\\[1pt]
-2(p+2-\tau)^3,\quad &  \tau\in [p+1,\,p+2),\\[1pt]
0,\quad & {\rm otherwise}.
\end{cases}
\end{equation}
\end{itemize}}
\end{rem}

\section{Numerical Studies and Discussions}
In this section, we first conduct a numerical study of the conditioning of the stiffness matrix on three sets of typical non-uniform grids used in practice to resolve singular solutions.  We then numerically solve several types of nonlocal problems by using the FEM nonlocal matrix to show the efficiency and accuracy of the proposed scheme. 

 \vspace{-4pt}
\subsection{Conditioning of the stiffness matrix on nonuniform meshes}  An issue of interest is the condition number estimation for the FEM nonlocal stiffness matrix. It is a well-studied topic in the uniform meshes case, but much less known in this nonuniform meshes setting. In general, the mesh geometry affects not only the approximation error of the finite element solution but also the spectral properties of the corresponding stiffness matrix.

Extensive research has been conducted on the condition number of the nonlocal stiffness matrix on uniform grids (cf. \cite{Akso2014, Zhou2010}), which the entries of the nonlocal stiffness matrix can be computed exactly based on the definition of hypersingular integrals form therein. With the aid of the analytical expressions, the following condition number bounds for uniform mesh were reported in \cite[p.1772]{Zhou2010}:
\begin{equation}\label{cond}
\text{Cond}(\bs{S_{\delta}})\leq \min\{N^{\alpha}\delta^{\alpha-2},\,\, N^{2} \}.
\end{equation}

However, the study on the condition number of the nonlocal stiffness matrix on the non-uniform mesh is still open. Inspired by the result of the condition number for FEM stiffness on nonuniform mesh in \cite[(26)]{Fri1972}, we predict the condition number of FEM stiffness matrix \eqref{stiffenty} on nonuniform meshes associated with fractional kernel \eqref{fracker0} is 
\begin{equation}\label{cond1}
\text{\rm Cond}(\bs{S_{\delta}})\leq \min\{\left(h_{\max } / h_{\min }\right)^{\alpha-1} N^{\alpha}\delta^{\alpha-2},\,\, \left(h_{\max } / h_{\min }\right)^{2} N^{2} \},
\end{equation}
where $h_{\max}=\max_{1\leq j\leq N}h_j$ and $h_{\min}=\min_{1\leq j\leq N}h_j$. It is clear that the above prediction can reduce to the result with uniform mesh \eqref{cond} when $h_{\max } =h_{\min }$. 

We will explore this numerically on graded, geometric and Shishkin meshes in order to demonstrate the prediction or conjecture. In the following numerical experiment, we will choose the mesh division as follows:
\begin{itemize}
\item[(i)] {Graded meshes}:\; A graded meshes on the interval $(a, b)$ is defined as follows 
\begin{equation}\label{GradedM1}
x_j =
\begin{cases}
 a + \dfrac{b-a}{2}\Big(\dfrac{2j}{N}\Big)^{\gamma},  &  j = 0, 1, \cdots, N/2-1,\\[6pt]
 b - \dfrac{b-a}{2}\Big(2-\dfrac{2j}{N}\Big)^{\gamma}, \;\;  &  j = N/2, N/2+1, \cdots, N.
\end{cases}
\end{equation}

 From \eqref{cond1}, the condition number of FEM stiffness matrix on the above graded meshes is
 \begin{equation}
  \text{Cond}(\bs{S_{\delta}})\sim N^{\gamma(\alpha-1)+1}.
\end{equation}

\item[(ii)] {Geometric meshes}: A geometric meshes on the interval $(a, b)$ is defined according to
\begin{equation}\label{GeometricM3}
x_j =
\begin{cases}
 a + q^{N-j} \dfrac{b-a}{2},  & \quad j=1, 2, \cdots, N-1,\\[4pt]
 b - q^{j-N} \dfrac{b-a}{2},  & \quad j=N, N+1, \cdots, 2N-1,
\end{cases}
\end{equation}
 where $q$ ($0 < q < 1$, $q$ is independent of $j$) is subdivision ratio.
 
 Using \eqref{cond1}, the condition number of FEM stiffness matrix on the above geometric meshes is
 \begin{equation}
  \text{Cond}(\bs{S_{\delta}})\sim q^{1-N}N^{\alpha}.
\end{equation}

\item[(iii)] { Shishkin meshes}:
A Shishkin meshes is piecewise uniform mesh over $[a, b]$  with $2M+N$ elements. For $\eta \in(0,\frac{1}{2})$, we refer to the mesh size over  $[0, \eta]$ and $[1-\eta,\, 1]$ as $h = \frac{\eta}{M}(b-a)$ and to the mesh size over $[\eta,\, 1-\eta]$ as $H = \frac{1-2\eta}{N}(b-a)$.
\end{itemize}

 According to \eqref{cond1}, the condition number of FEM stiffness matrix on the above Shishkin meshes is
 \begin{equation}
  \text{Cond}(\bs{S_{\delta}})\sim \Big(\frac{M(1-2\eta)}{N\eta}\Big)^{\alpha-1}N^{\alpha}.
\end{equation}

In Figure\,\ref{cond1-1} (a)-(c), we plot the condition number of the stiffness matrix  against various $N$, fixed horizon $\delta=0.3$ and different $\alpha$. We observe a good agreement between the numerical results and \eqref{cond1}, which indicates a clear dependence of the condition number on three parameters: the mesh ratio $ h_{\max}/h_{\min}$, the size of the nonlocality horizon parameter $\delta$, and index of the fractional kernel function $\alpha$.

\medskip

 \begin{figure}
 \begin{minipage}[t]{0.5\linewidth}
 \centering
 \includegraphics[width=0.96\textwidth]{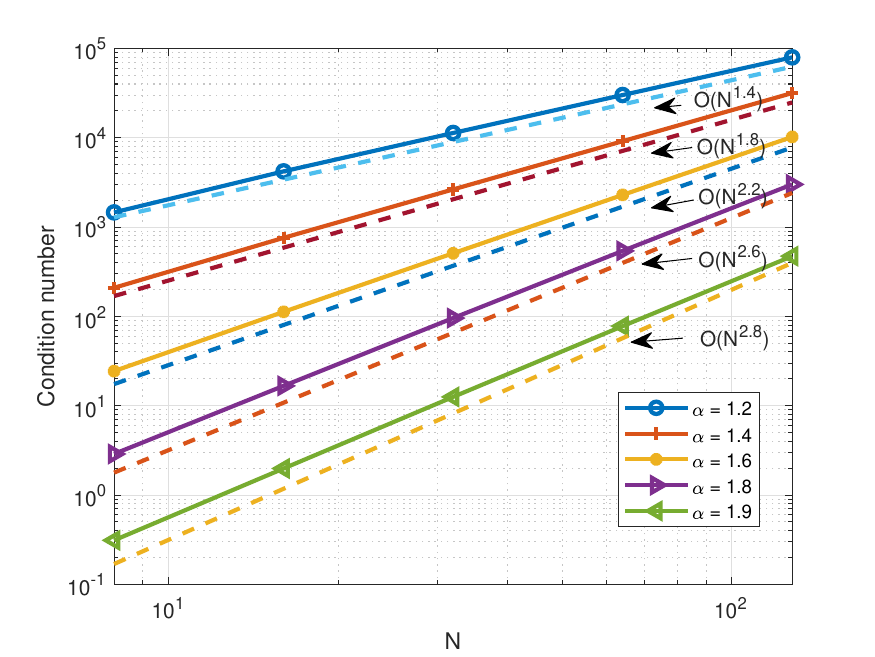}
 \centerline{(a)\,\,$\text{Cond}(\bs{S_{\delta}})\sim N^{\gamma(\alpha-1)+1}$.}
 \end{minipage}%
 \begin{minipage}[t]{0.5\linewidth}
  \centering
  \includegraphics[width=0.9\textwidth]{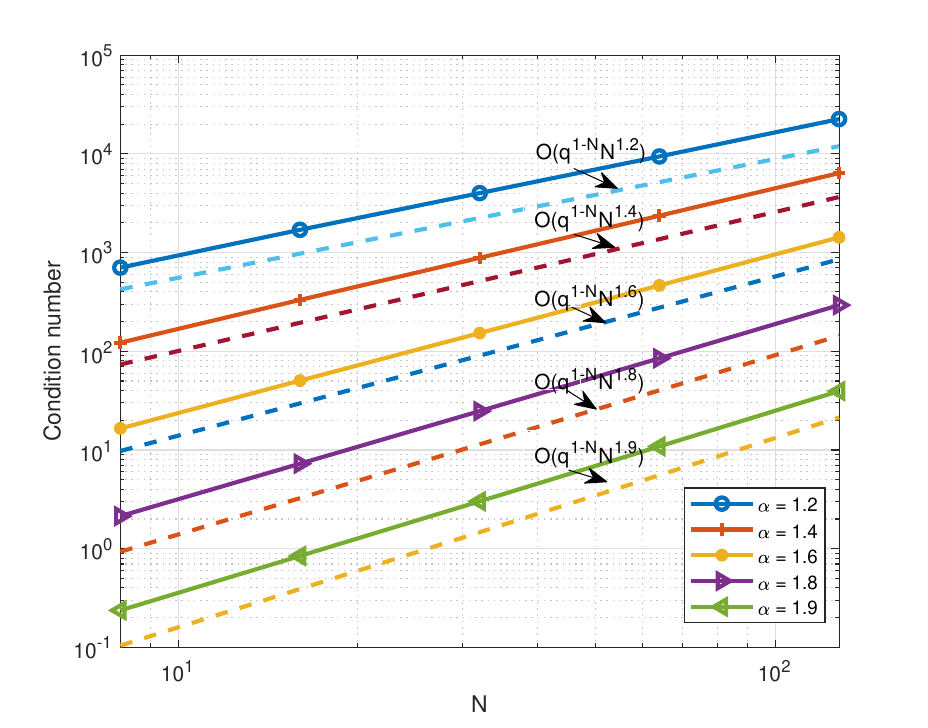}
  \centerline{(b)\,\,$\text{Cond}(\bs{S_{\delta}})\sim q^{1-N}N^{\alpha}$.}
 \end{minipage}

  \begin{minipage}[t]{0.5\linewidth}
 \centering
 \includegraphics[width=0.96\textwidth]{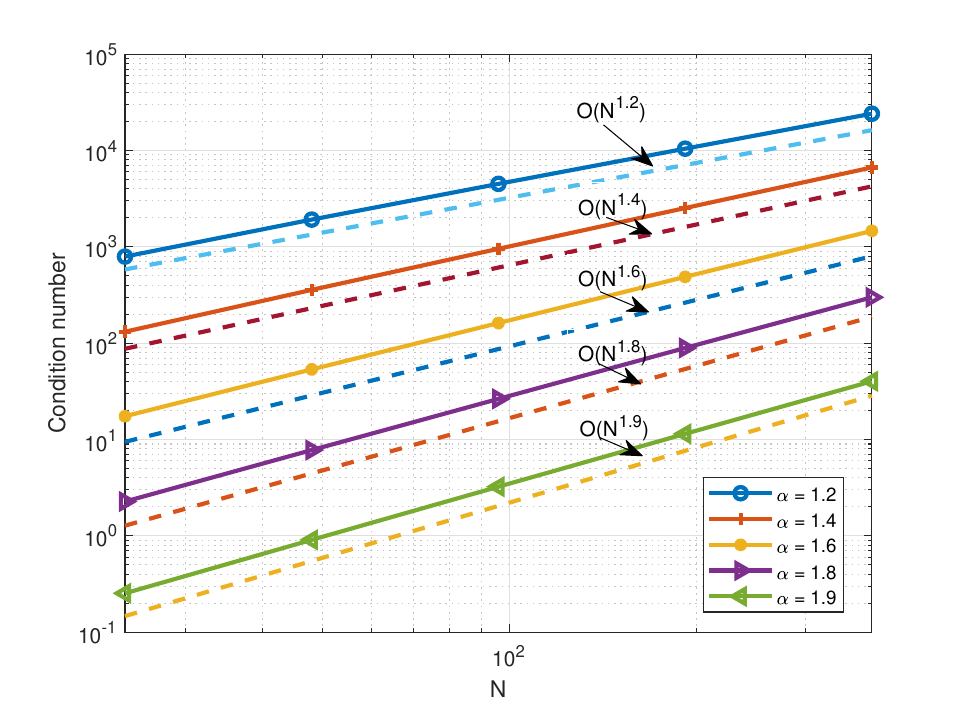}
  \centerline{(c)\,\,$\text{Cond}(\bs{S_{\delta}})\sim (\frac{2(1-2\eta)}{\eta})^{\alpha-1}N^{\alpha}$.}
 \end{minipage}%
	\begin{minipage}[t]{0.5\linewidth}
		\centering
		\includegraphics[width=0.96\textwidth]{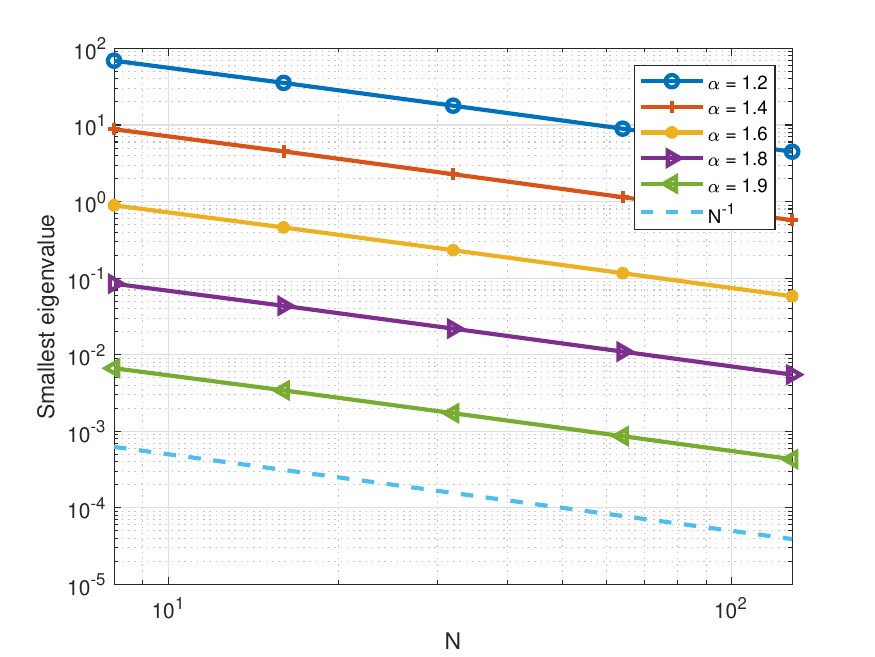}
  \centerline{(d)\,\,$\lambda_{\min} \sim N^{-1}$.}
	\end{minipage}
  
 \caption{Conditioning and the smallest eigenvalue of the stiffness matrix $\bs{S}_{\delta}$ for $\rho_\delta(s)= \frac{2-\alpha}{\delta^{2-\alpha}} s^{-(1+\alpha)}$ on different meshes with $\delta=0.3$. Condition number: (a) Graded meshes with $\gamma=2$, (b) Geometric meshes with $q = 0.999$, (c) Shishkin meshes with $M = 2N$ and $\eta = 0.2$, (d) The smallest eigenvalue on graded meshes with $\gamma=2$.} \label{cond1-1}
\end{figure}

Meanwhile, we plot the first smallest eigenvalue of the stiffness matrix against various $N$ and different $\alpha$ in Figure\,\ref{cond1-1} (d), where we observed that the smallest eigenvalue of $\bs{S}_{\delta}$ satisfy
$$
\lambda_{\min }(\bs{S}_{\delta})=c h_{\max }=c N^{-1}, \quad \alpha \in[1, 2].
$$
This result is consistent with the classical usual Laplacian (i.e., $\delta=0$) and the integral fractional Laplacian (i.e., $\delta  \to \infty$)  (cf. \cite{Chen2021non-uniform}).

 \subsection{Nonlocal BVPs with various interaction kernels} 

 In this subsection,  we focus on numerical approximation the following nonlocal problems
\begin{equation}\label{problem32}
\begin{cases}
    {\mathcal L}_\delta u(x)=\lambda u(x)+f(x),\;\;\;&\text{on}\;\Omega,\\
    u(x)=0,&\text{on}\;\Omega_\delta,
\end{cases}
\end{equation}
 which can be considered as nonlocal BVP if $\lambda=0$; or the nonlocal Helmholtz problem if $\lambda<0$ is given; or eigenvalue problem if $\lambda$ is unknown and $f(x)=0$.
 

\subsubsection{\bf Nonlocal BVP with $\lambda=0$}
In this subsection, we will consider the accuracy test and limiting behaviours for nonlocal BVPs \eqref{problem32} with $\lambda=0$.

\medskip
\noindent {\bf Example 1.} {\bf (Accuracy test of smooth solutions).} We first test the accuracy of the proposed method for the nonlocal BVP \eqref{problem32} with $\lambda=0$ and involving a fractional kernel function \eqref{fracker0} with different values of the parameters $\alpha$.

In the simulation, we use the FEM on uniform meshes, and take the exact solution $u(x)=x^2(1-x)^2$ and $\Omega=(0,1)$  (cf. \cite{Chen2011, Tian2013}). In Figure \ref{fig2}, we plot the numerical errors against various $h$ with different parameters $\alpha=0.1,0.3,0.5,0.7,0.9$. We observe from Figure \ref{fig2} that the convergence order of the piecewise linear FEM on uniform meshes is $O(h^{2})$ for fixed horizon $h \leq \delta \leq \frac{b-a}{2}$, while for fixed horizon $0 \leq \delta \leq h$, the convergence rate is $O(h)$.

  \begin{figure}[!h]
	\begin{minipage}[t]{0.5\linewidth}
  \centering
  \includegraphics[width=0.96\textwidth]{ 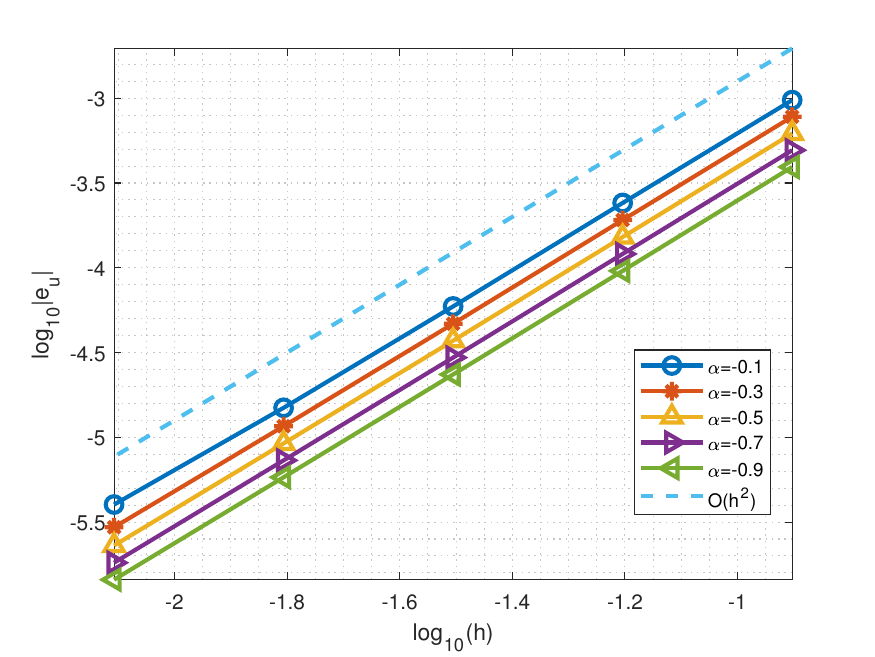}
  \centerline{(a)\,\, $h \leq \delta \leq \frac{b-a}{2}$.}
	\end{minipage}%
	\begin{minipage}[t]{0.5\linewidth}
   \centering
   \includegraphics[width=0.96\textwidth]{ 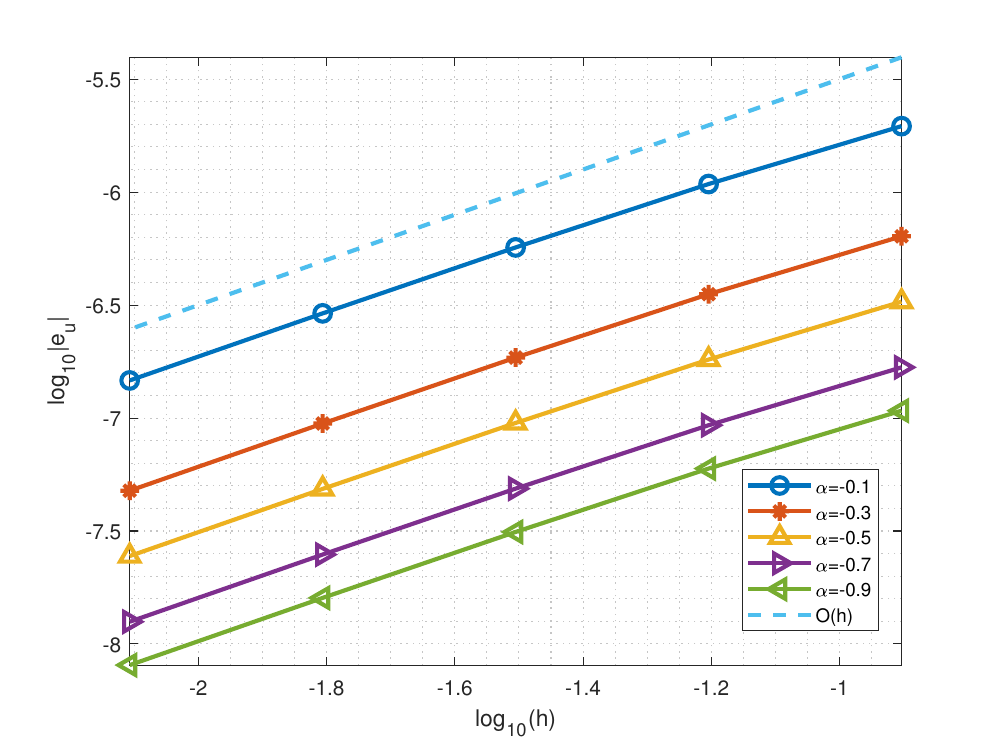}
   \centerline{(b)\,\, $0 < \delta < h $.}
	\end{minipage}
\caption{The numerical errors for $\rho_\delta(s)= \frac{2-\alpha}{\delta^{2-\alpha}} s^{-(1+\alpha)}$ with different $\alpha \in (-1, 0)$.}\label{fig2}
\end{figure}

\medskip
\noindent{\bf Example 2.}  {\bf (Accuracy test of discontinuous solution).}
Next, we consider the FEM on uniform and graded meshes for the nonlocal BVP \eqref{problem32} with $\lambda=0$ and the discontinuous exact solution on $\Omega=(0,1)$ given by (cf. \cite{Chen2011, Tian2013})  
\begin{equation}
u(x) =
  \begin{cases}
  2x^2,\quad &   x<0.5,\\[1pt]
 (1-x)^2, \quad & \text {otherwise}.
\end{cases}
\end{equation}
In this computation, we take the constant box potential $\rho_{\delta}(s) = \frac{3}{\delta^3}$, and choose the following graded meshes
\begin{equation}\label{GradedM}
x_j =
\begin{cases}
  \dfrac{b+a}{2} -  \dfrac{b-a}{2}\Big(1-\dfrac{2j}{N}\Big)^{\gamma},  &  j = 0, 1, \cdots, \dfrac{N}{2}-1,\\[6pt]
 \dfrac{b+a}{2} + \dfrac{b-a}{2}\Big(\dfrac{2j}{N}-1\Big)^{\gamma}, \;\;  &  j = \dfrac{N}{2}, \dfrac{N}{2}+1, \cdots, N,
\end{cases}
\end{equation}
where $\gamma > 1$ is a suitable grading exponent. Different from \eqref{GradedM1}, the above graded meshes clustering near the discontinuous points $x=0.5$, which is beneficial for capturing the discontinuous nature of the solution. 

In Figure \ref{fig3}(a)-(b), we plot the numerical errors of the FEM on uniform meshes with various $\delta$, which shows that the convergence order is $O(h^{0.5})$ in $L^{\infty}$-norm and $O(h)$ in $L^{2}$-norm. As a comparison, we plot numerical errors of the FEM on graded meshes with various $\delta$ in Figure \ref{fig3}(c)-(d). We observe that, for fixed grading exponent $1 \leq \gamma \leq 2$, the $L^2$-norm errors are roughly of $O(h^{\frac{\gamma}{2}})$. For fixed grading exponent $ \gamma \geq 2$,  i.e., the $L^2$-norm errors are roughly of $O(h^{\min(2, \gamma-1)})$. The convergence order of FEM on graded meshes is significantly higher than that of FEM on uniform meshes when there are discontinuities in the solution. 
 \begin{figure}[ht]
	\begin{minipage}[t]{0.5\linewidth}
		\centering
		\includegraphics[width=0.9\textwidth]{ 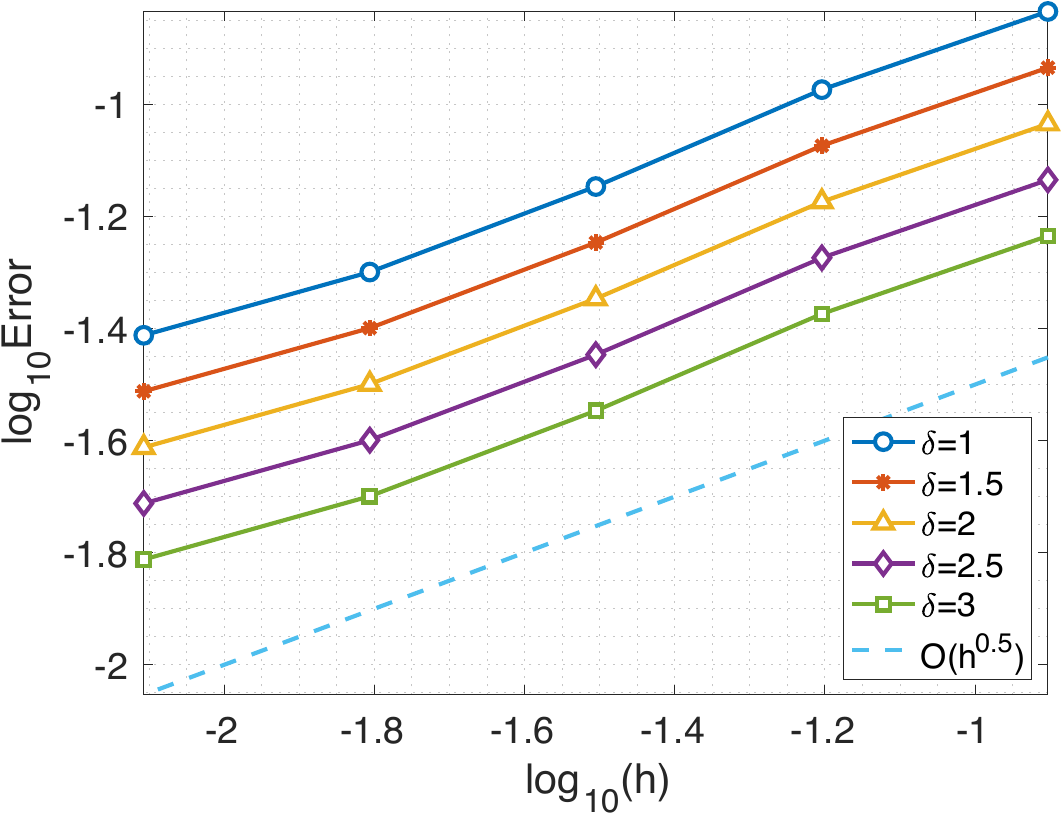}
   \centerline{(a)\,\,$L^{\infty}$-norm errors.}
	\end{minipage}%
	\begin{minipage}[t]{0.5\linewidth}
		\centering
		\includegraphics[width=0.9\textwidth]{ 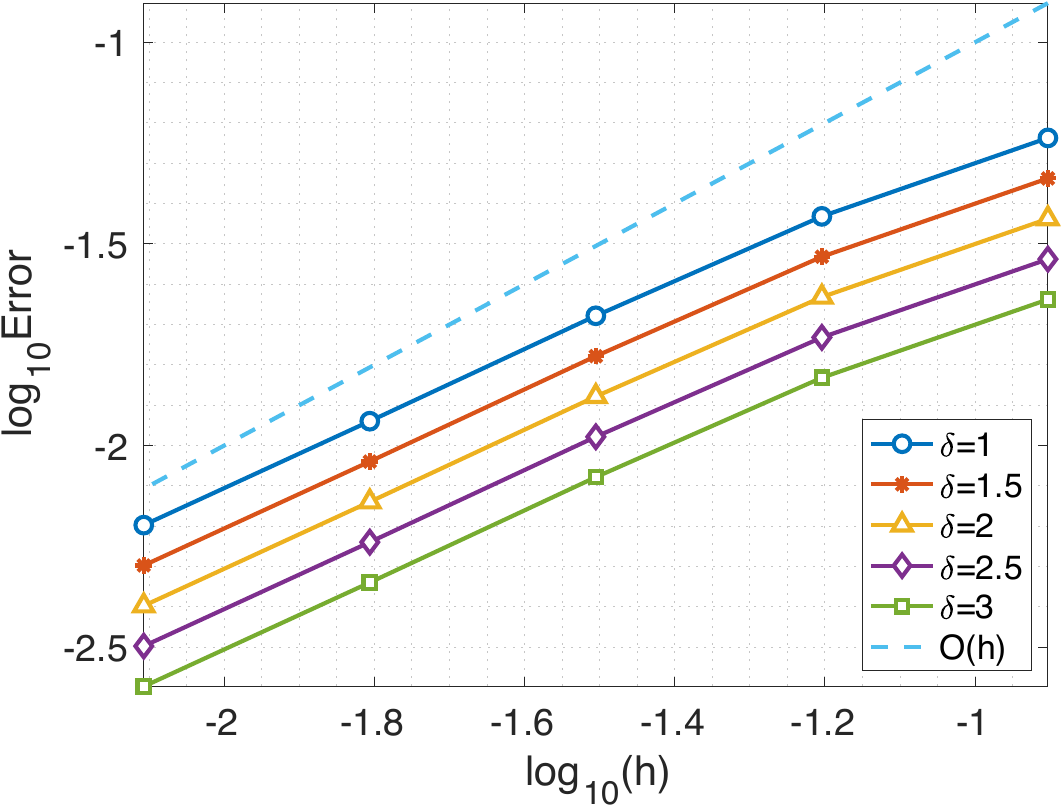}
    \centerline{(b)\,\,$L^2$-norm errors.}
	\end{minipage}
	\begin{minipage}[t]{0.5\linewidth}
	\centering
	\includegraphics[width=0.96\textwidth]{ 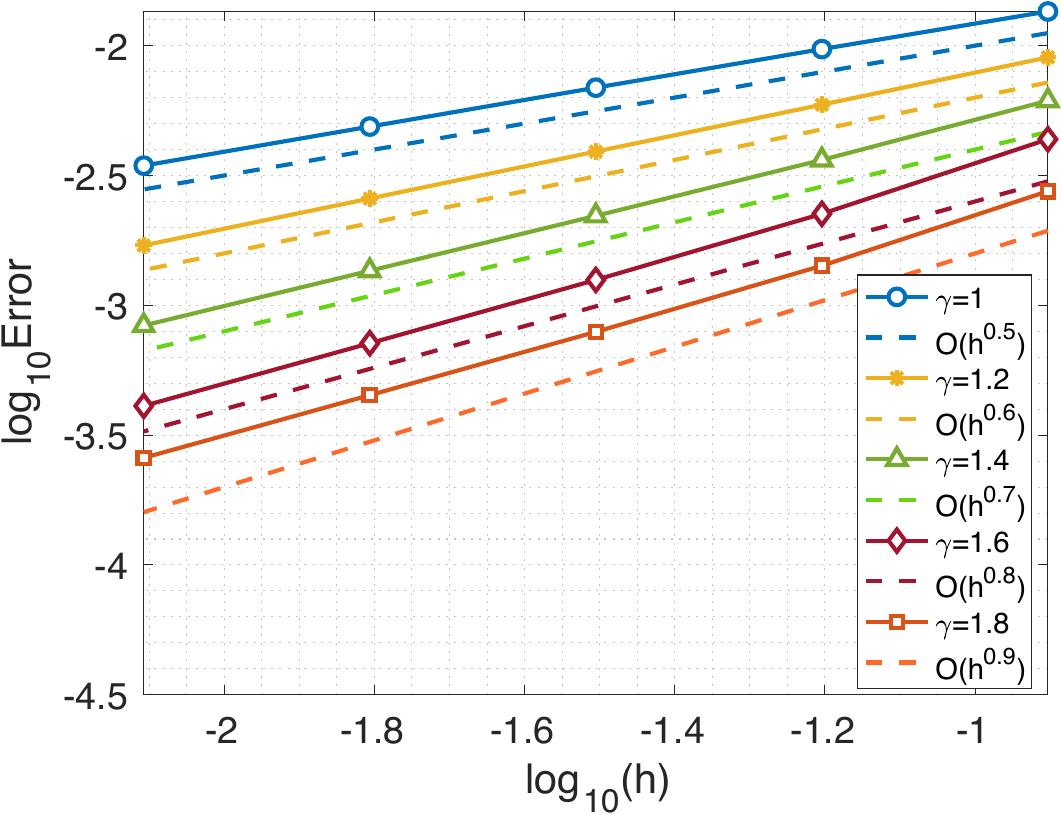}
    \centerline{(c)\,\,$1\leq \gamma <2 $.}
	\end{minipage}%
	\begin{minipage}[t]{0.5\linewidth}
	\centering
	\includegraphics[width=0.96\textwidth]{ 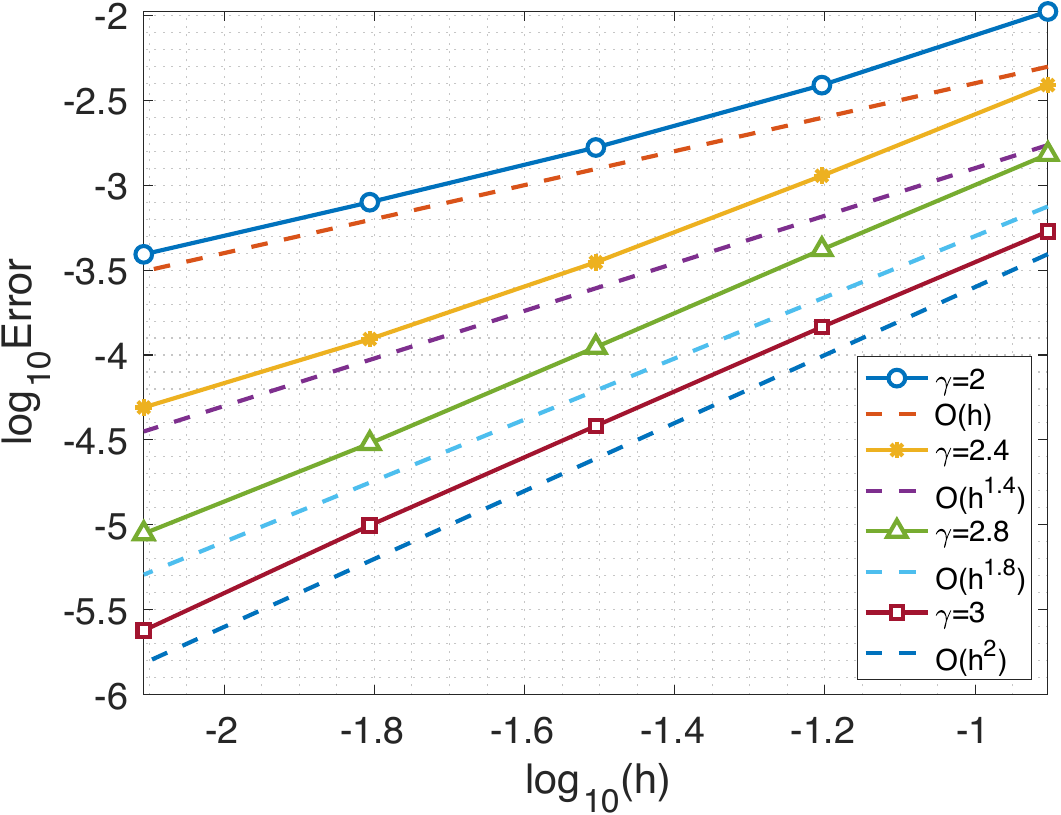}
    \centerline{(d)\,\,$ \gamma \geq 2 $.}
	\end{minipage}
	\caption{The numerical errors of discontinuous solution. (a)-(b): FEM on uniform meshes, (c)-(d): FEM on graded meshes in $L^2$-norm.} \label{fig3}
\end{figure}

\begin{figure}
	\begin{minipage}[t]{0.52\linewidth}
	\centering
	\includegraphics[width=1.06\textwidth]{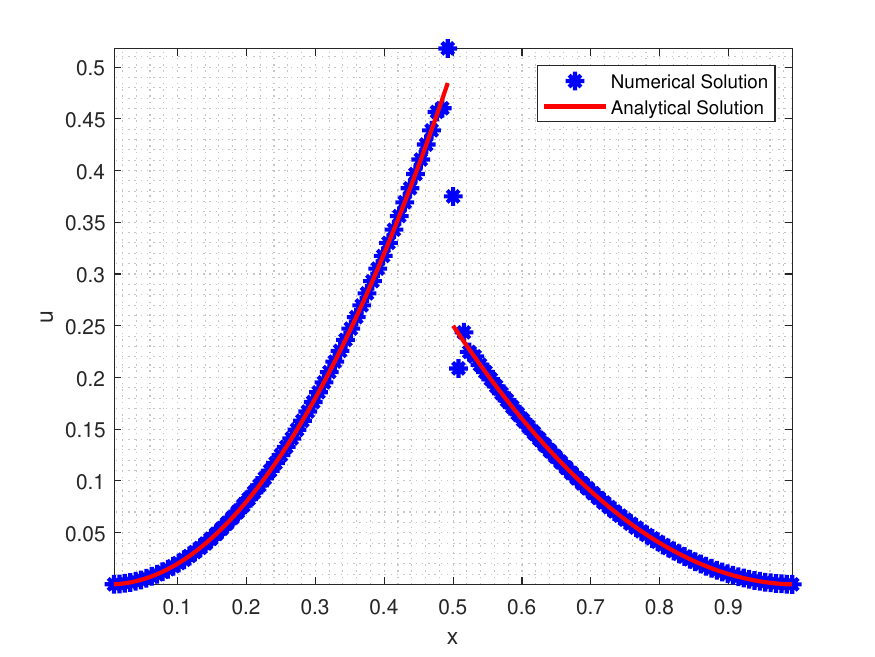}
    \centerline{(a)\,\,uniform meshes.}
	\end{minipage}%
	\begin{minipage}[t]{0.52\linewidth}
	\centering
	\includegraphics[width=1.06\textwidth]{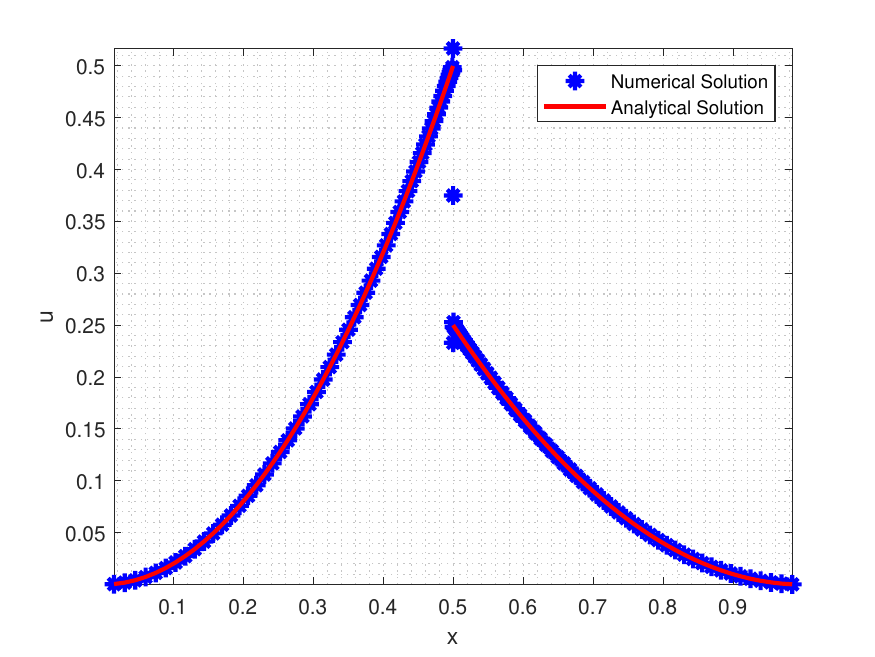}
    \centerline{(b)\,\,graded meshes with $\gamma =2$.}
	\end{minipage}
	\caption{ Analytical and numerical solutions for $\rho_\delta(s)=\frac{3}{\delta^3}$.} \label{fig44}
\end{figure}

 It should be pointed out that no instability has been observed in any of our computations. The piecewise linear FEM we use is not only convergent to the nonlocal constrained value problem with fixed $\delta$ but also convergent to the local differential equation with a fixed ratio between $\delta$ and $h$. Thus, our scheme is an asymptotically compatible scheme \cite{Tian2014}. In addition, we find from Figure \ref{fig44} that the numerical solutions of FEM on graded meshes have a better approximation effect and stability than the FEM on uniform meshes at the discontinuity point.

\medskip
\noindent {\bf Example 3.} {\bf (Limiting behaviours).}
 It is interesting to numerically study the local limit (i.e., $\delta \to 0$)  and the nonlocal interactions approach infinite (i.e., $\delta \to \infty$) of the nonlocal BVP \eqref{problem32} with $\lambda=0$. 
More specifically, we compare solutions of the nonlocal BVP \eqref{problem32} with $\lambda=0$ to the ODE  (resp. fractional Laplacian equation) equation with special interest in observing behaviors in the local limit $\delta \to 0$ (resp. fractional limit $\delta \to \infty$).
  
\medskip
\noindent{\bf Case 1 (local limit):} To study the local limit (i.e., $\delta \to 0$), we adopt the piecewise linear FEM for solving the nonlocal BVP \eqref{problem32} with $\lambda=0$, source function $f(x)=2$ and kernel function \eqref{fracker0}.
The exact solution is unknown, but the following ODE
 \begin{equation}\label{examp4-2}
\begin{cases}
  -u''(x)= 2,\;& \text{for}\ x\in (-1,1),\\[4pt]
u(-1)=u(1)= 0,
\end{cases}
\end{equation}
has the exact solution $u(x)=1-x^2$.

In our computation, we use the graded meshes \eqref{GradedM1}. 
We compare solutions of the nonlocal model to the ODE \eqref{examp4-2} with special interest in observing behaviors in the local limit $\delta \to 0$.  Figure \ref{Figdelta1} shows the comparison of numerical solutions for the nonlocal model and the solution of the governing equation given by \eqref{examp4-2} for $\gamma =2 $ and $ \gamma =3 $, respectively. It can be clearly observed that as $\delta \to 0$, the solution of the nonlocal diffusion model converges to the solution of the local one \eqref{examp4-2}. 


\begin{figure}
	\begin{minipage}[t]{0.48\linewidth}
		\centering
		\includegraphics[width=0.96\textwidth]{ 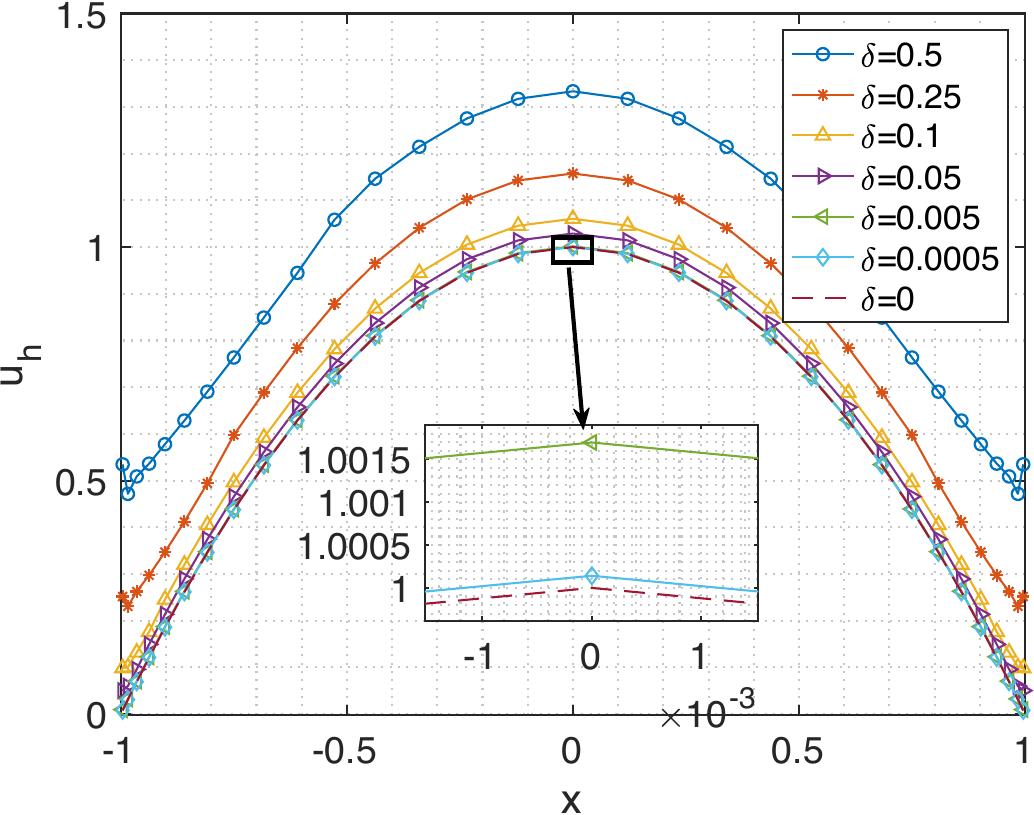}
       \centerline{(a)\,\, $\gamma =2$.}
	\end{minipage}%
	\begin{minipage}[t]{0.48\linewidth}
		\centering
		\includegraphics[width=0.96\textwidth]{ 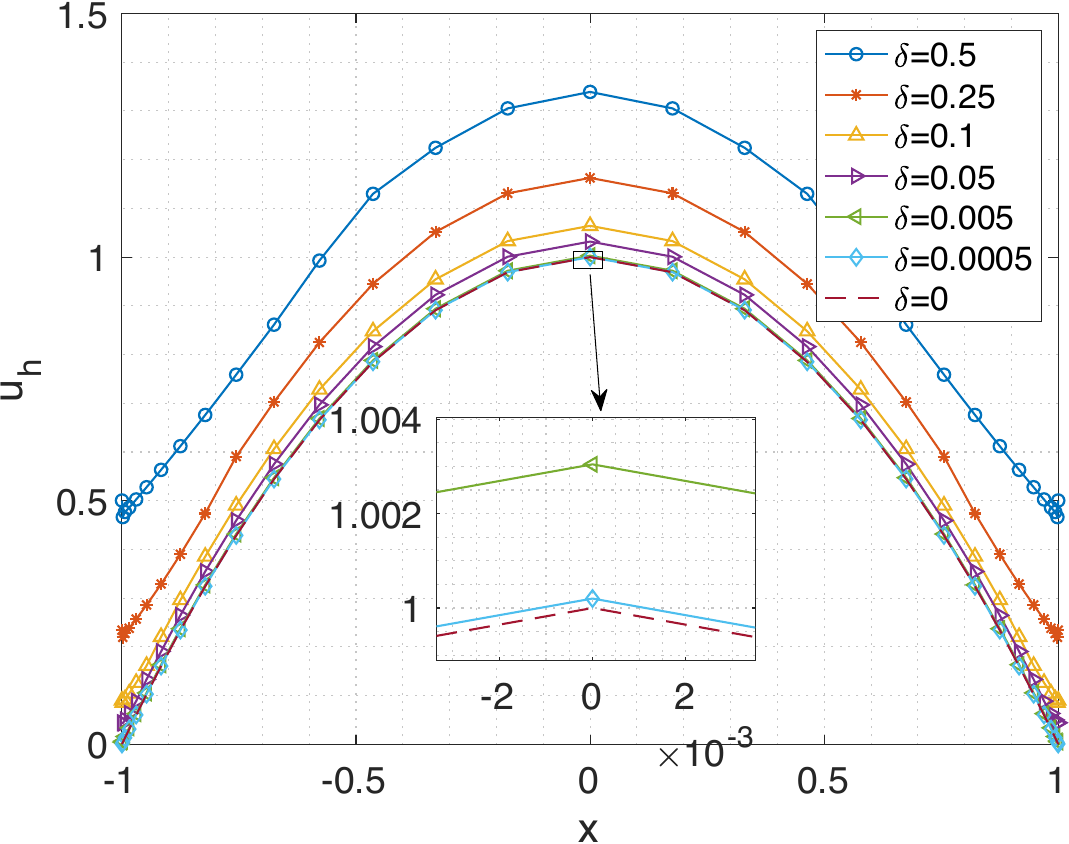}
       \centerline{(b)\,\, $\gamma =3$.}
	\end{minipage}

	\centering
	\begin{minipage}{0.48\linewidth}
		\centering
		\includegraphics[width=1.06\linewidth]{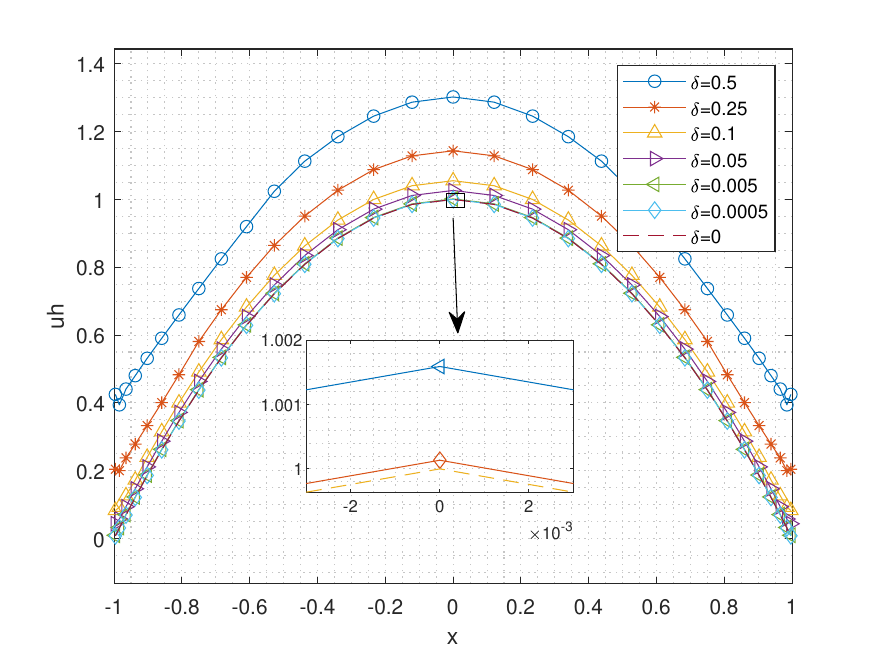}
    \centerline{(c)\,\, $\gamma =2$.}
		\label{chutian1}
	\end{minipage}
	\begin{minipage}{0.48\linewidth}
		\centering
		\includegraphics[width=1.06\linewidth]{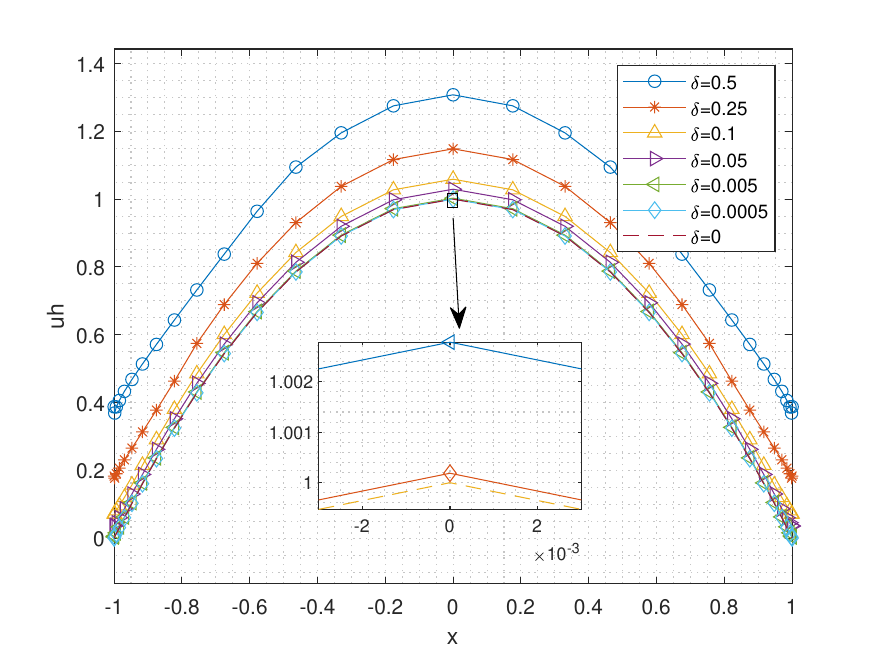}
    \centerline{(d)\,\, $\gamma =3$.}
		\label{chutian2}
	\end{minipage}

    \caption{The evolution process of the numerical solutions on graded meshes as $\delta \rightarrow 0$. Top: $\rho_\delta(s)=\frac{3}{\delta^3}$, Bottom: $\rho_{\delta}(s) = \frac{2-\alpha}{\delta^{2-\alpha}}s^{-(1+\alpha)}$ and $\alpha=-0.25$.}
    \label{Figdelta1}
\end{figure}

\medskip
\noindent
{\bf Case 2 (global limit):} Now, we turn to the global limit (i.e., $\delta\rightarrow\infty$), that is, consider the nonlocal models with the following fractional Laplacian kernel \eqref{fracker0}. 
It is known that the solution of the nonlocal problem with a fractional Laplacian kernel exhibits singularities near the boundary of $\Omega$. We take $\Omega=(-1,1)$ and consider a piecewise linear FEM on graded meshes \eqref{GradedM1}  for solving the nonlocal BVP \eqref{problem32} with $\lambda=0$ and $f(x)=1$.
The exact solution is unknown, but the following fractional Poisson equation
 \begin{equation}\label{examp5-2}
\begin{cases}
  (-\Delta)^{\alpha} u(x) = 1,\quad  & x\in \Omega= (-1,1),\\[4pt]
   u(x)=0, \quad & x\in\Omega^c=\mathbb{R}\backslash\bar{\Omega},
\end{cases}
\end{equation}
where
 \begin{equation}\label{fracLap-defn}
(-\Delta)^{\alpha} u(x)=C_{\alpha}\, {\rm p.v.}\! \int_{\mathbb R} \frac{u(x)-u(y)}{|x-y|^{1+2\alpha}} \rd y, \;\;\;\;
\end{equation}
with ``p.v." stands for the principle value. The equation \eqref{examp5-2} admits a exact solution $ u(x)=(1-x^{2})^{\alpha}_+/{\Gamma (2\alpha+1)}$.

As we can see from Figure \ref{fig5}, the solution of the nonlocal BVP converges to the solution of the fractional Laplacian equation \eqref{examp5-2} as the nonlocal interactions become infinite, i.e., $\delta \to \infty$. Thus the conclusions in \cite{Elia2013} are verified numerically.

\begin{figure}
	\begin{minipage}[t]{0.5\linewidth}
		\centering
		\includegraphics[width=1.06\textwidth]{ 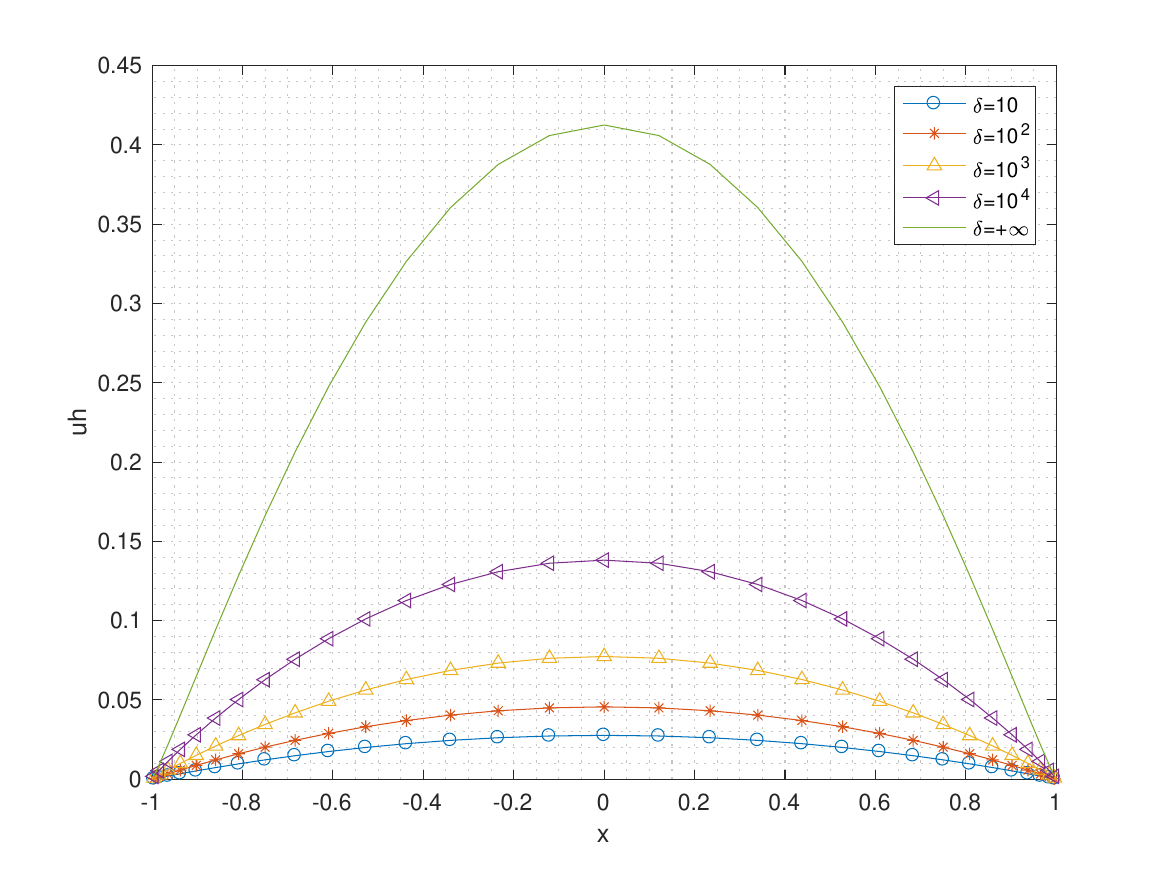}
       \centerline{(a)\,\, $\gamma =2$.}
	\end{minipage}%
	\begin{minipage}[t]{0.5\linewidth}
		\centering
		\includegraphics[width=1.06\textwidth]{ 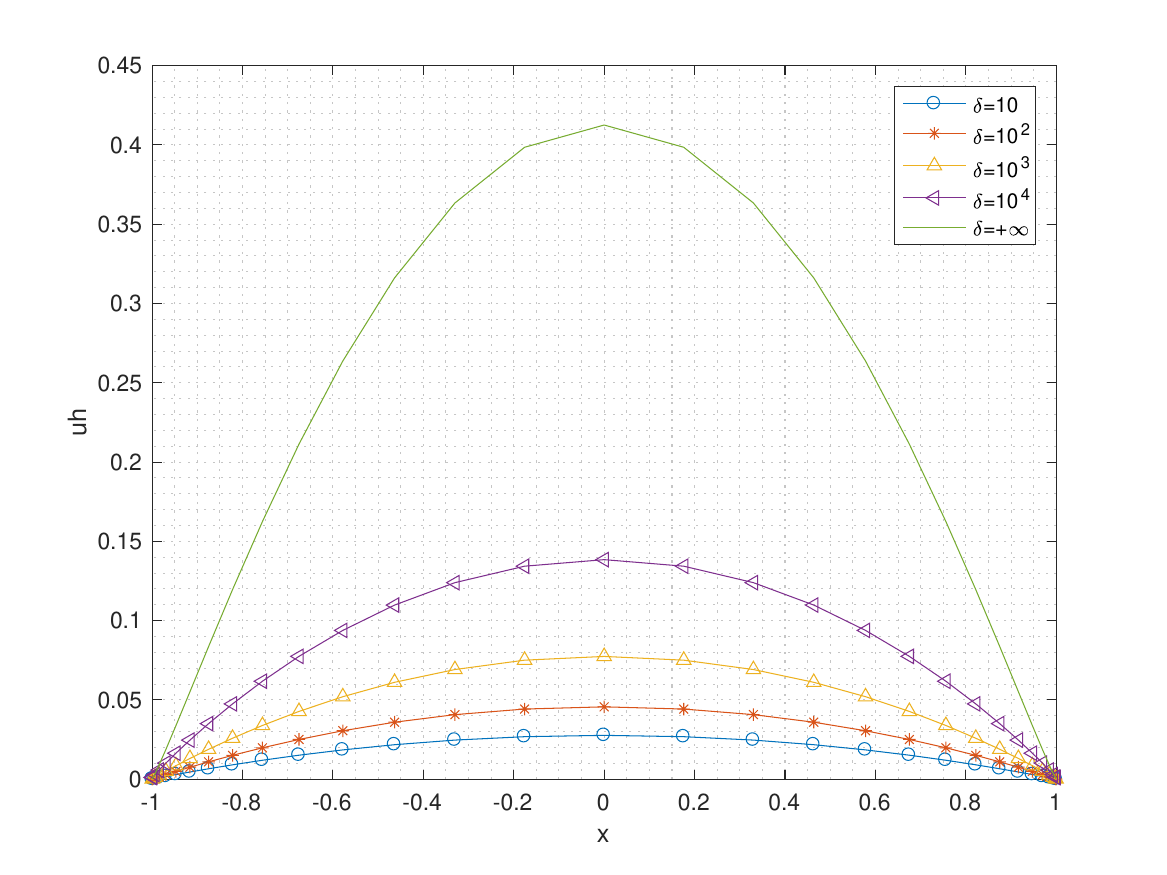}
        \centerline{(b)\,\, $\gamma =3$.}
	\end{minipage}
	\caption{The evolution process of the numerical solutions on graded meshes with $\gamma=1.1$ as $\delta\rightarrow\infty$. }
   \label{fig5}
\end{figure}

\subsubsection{\bf Nonlocal BVP  with $\lambda<0$}
In this subsection, we will consider the dynamics of nonlocal BVPs \eqref{problem32} with $\lambda<0$.

\medskip
\noindent {\bf Example 4.} {\bf (The dynamics of nonlocal Helmholtz problem).} In this example, we will consider the FEM on uniform meshes for the following nonlocal Helmholtz problem
\begin{equation}\label{Helm}
\begin{cases}
    \mathcal{L}_\delta u(x)-k^2n(x)u(x)=f(x),\;&\text{on}\;\Omega =(-L, L),\\
    u(x)=0,&\text{on}\;\Omega_\delta=[-L-\delta, L+\delta].
\end{cases}
\end{equation}
where $n(x)$ is a given function and $k^2$ is a given constant. Here, we focus on the dynamics of the solution with the following two cases: 

{\bf Case 1:} $n(x)$ is continuous function
\begin{equation}
 n(x) = \sin{x},\quad x\in (-L, L).
\end{equation}

{\bf Case 2:} $n(x)$ is piecewise constant
\begin{equation}
 n(x) =
  \begin{cases}
  0.5, \;  & x\in(-L,0),\\[1pt]
  1, \; & x\in(0,L).
\end{cases}
\end{equation}
For the convenience of description, we take $f(x)=k^2$ and the kernel function \eqref{fracker0}, i.e., $\rho_{\delta}(s) = \frac{2-\alpha}{\delta^{2-\alpha}}s^{-(1+\alpha)}$ with fixed  $\alpha=0.5$.

In Figure \ref{FigHelm_sin}, we depict the profile of the numerical solution for {\bf Case 1} on the computation domain $\Omega=[-4\pi,4\pi]$ with different $k^2$ and $\delta$. We observe from the top of Figure \ref{FigHelm_sin} that oscillation in regions with $\sin(x)<0$ exponential growth and decay in regions with $\sin(x)>0$. This observation is consistent with the one described in \cite[p.81]{Trefethen2018ODE} for the local model. Moreover, we find from the bottom of Figure \ref{FigHelm_sin} that as $k^2$ decreases, the oscillation frequency within the oscillatory region amplifies.

\begin{figure}[htbp]
  \centering
  \subfigure[$\delta = 0.1,\, k^2=1000/3$]{
    \includegraphics[height=3.5cm,width=0.32\columnwidth]{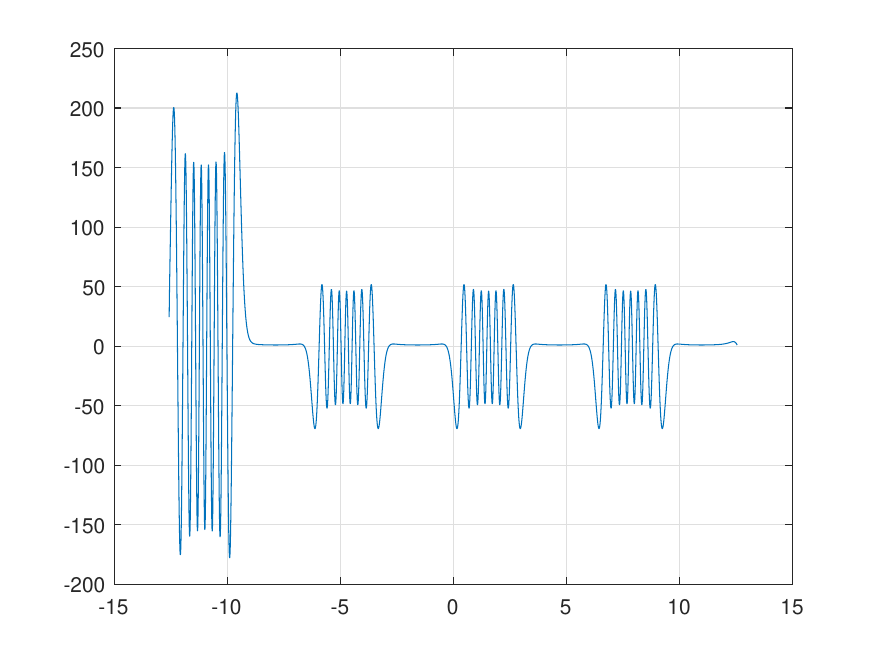}}
  \subfigure[$\delta = 0.001,\, k^2=1000/3$]{
    \includegraphics[height=3.5cm,width=0.32\columnwidth]{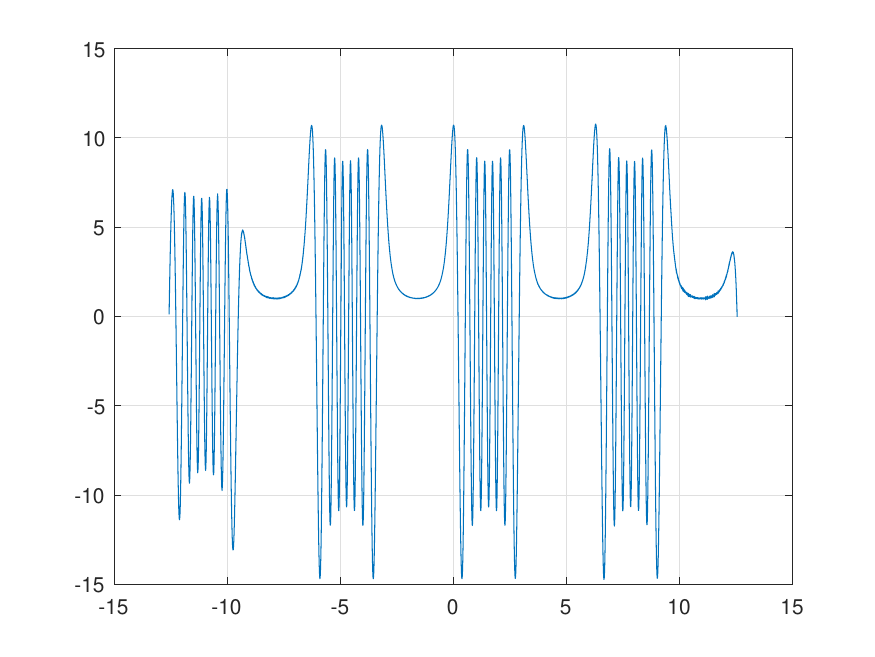}}
  \subfigure[$\delta = 0.0001,\, k^2=1000/3$]{
    \includegraphics[height=3.5cm,width=0.32\columnwidth]{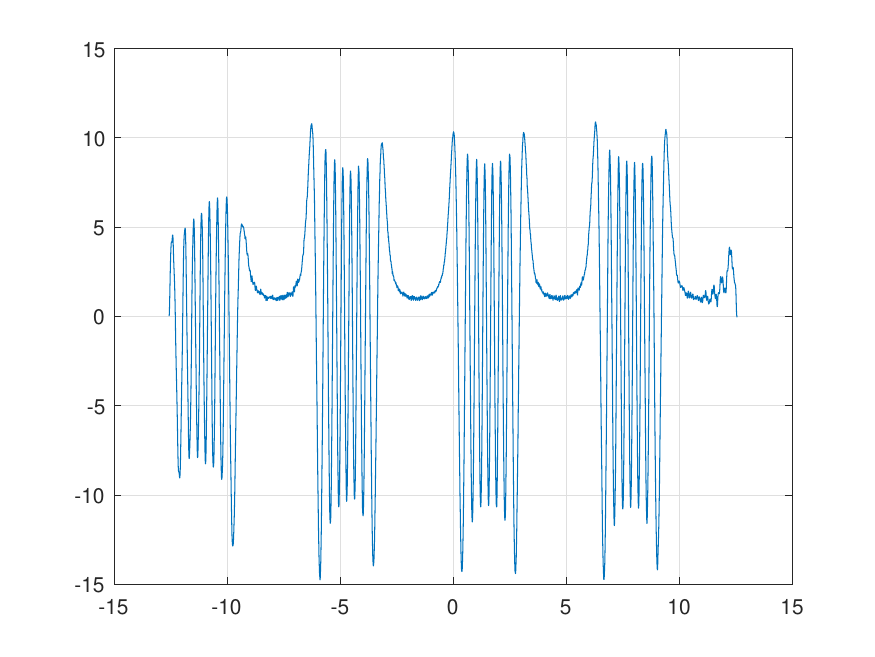}}

 \subfigure[$k^2= 10/3,\, \delta=0.001$]{
    \includegraphics[height=3.5cm,width=0.32\columnwidth]{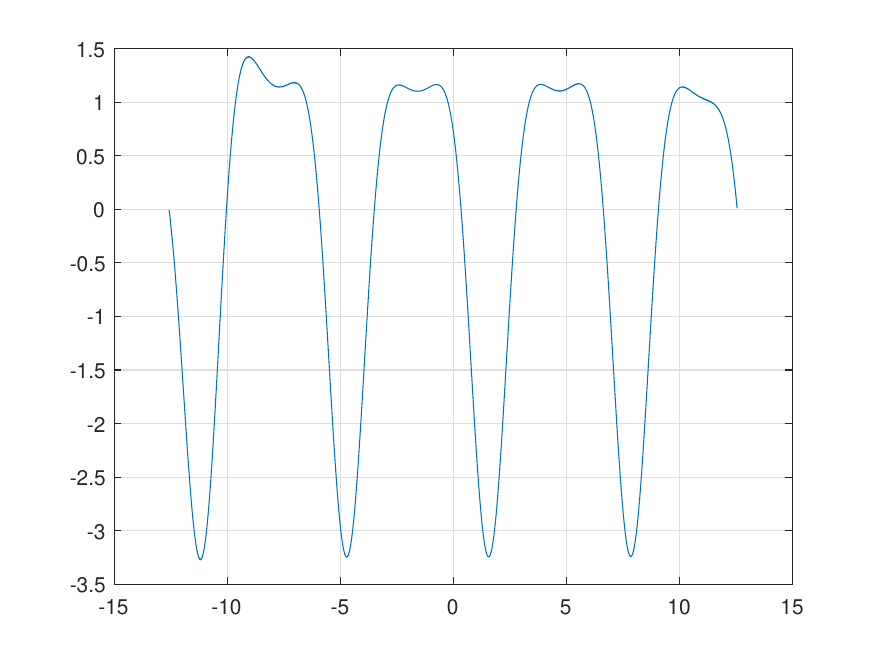}}
  \subfigure[$k^2 = 1000/3,\, \delta=0.001$]{
    \includegraphics[height=3.5cm,width=0.32\columnwidth]{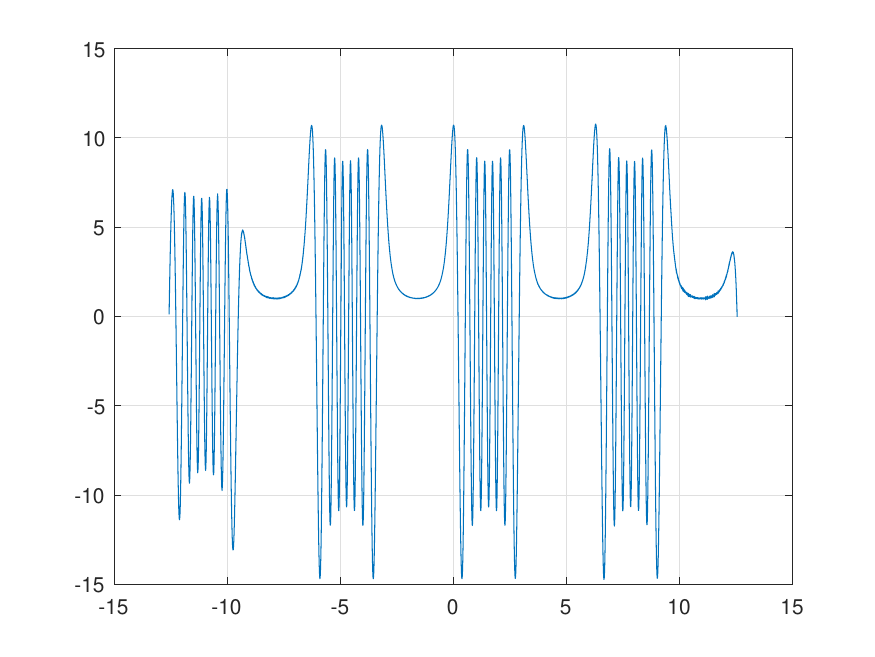}}
  \subfigure[$k^2 = 10000/3,\, \delta=0.001$]{
    \includegraphics[height=3.5cm,width=0.32\columnwidth]{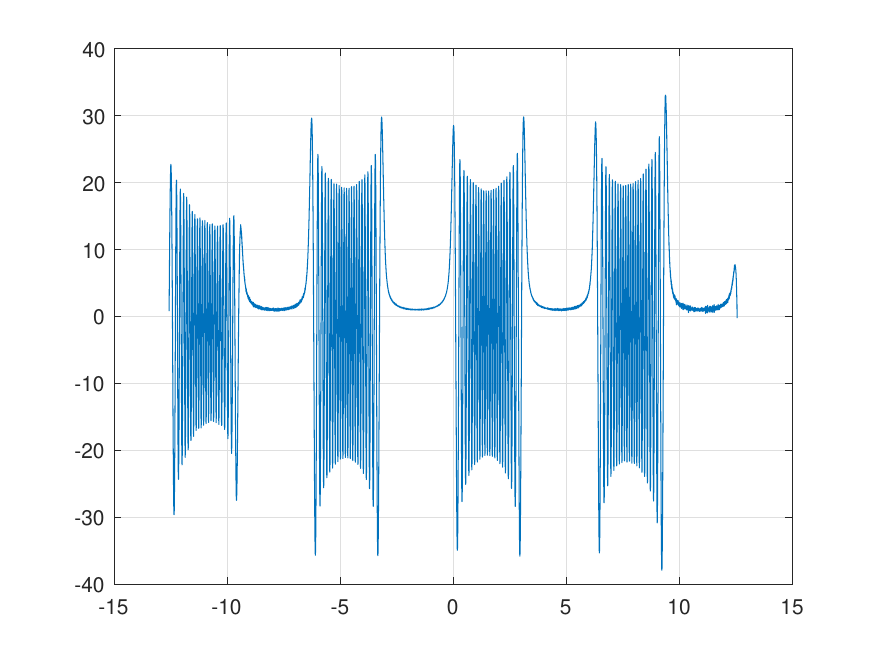}}

  \caption{The profiles of numerical solutions for Case 1. Top: versus different $\delta$ with fixed $k^2=1000/3$; Bottom: versus different $k^2$ with fixed $\delta=0.001$. }\label{FigHelm_sin}
\end{figure}



In Figure \ref{FigHelm_piecewise}, we plot the profile of the numerical solution for {\bf Case 2} on the computation domain $\Omega=[-100,100]$ with $k^2=2$. It can be seen that the amplitude of the solution is affected by magnitude of the piecewise constant function $n(x)$, that is,  the amplitude is larger within the interval $(-100, 0)$, while it is relatively smaller within the interval $(0, 100)$. We also observe that as $\delta$ decreases, the solution converges to the solution of the local Helmholtz problem, i.e., $\delta=0$.

\begin{figure}[htbp]
  \centering
  \subfigure[$\delta = 0.1,\, k^2=2$]{
    \includegraphics[height=3.5cm,width=0.32\columnwidth]{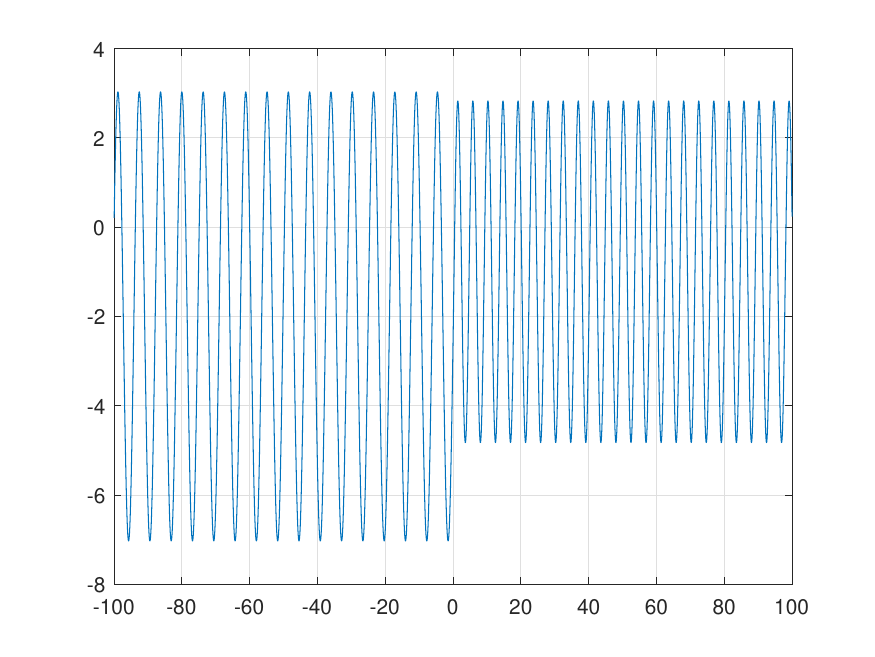}}
  \subfigure[$\delta = 0.01,\, k^2=2$]{
    \includegraphics[height=3.5cm,width=0.32\columnwidth]{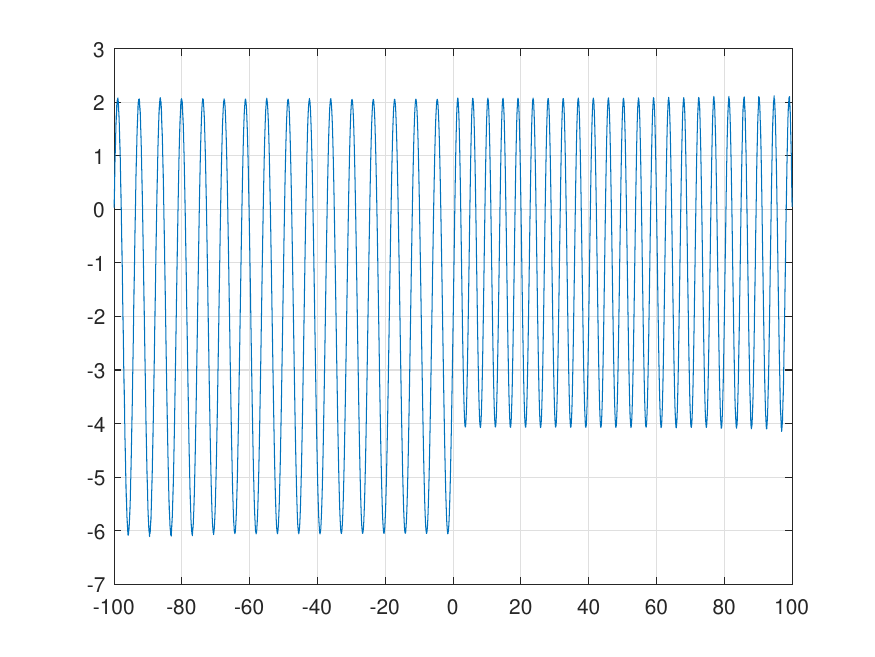}}
  \subfigure[$\delta = 0,\, k^2=2$]{
    \includegraphics[height=3.5cm,width=0.32\columnwidth]{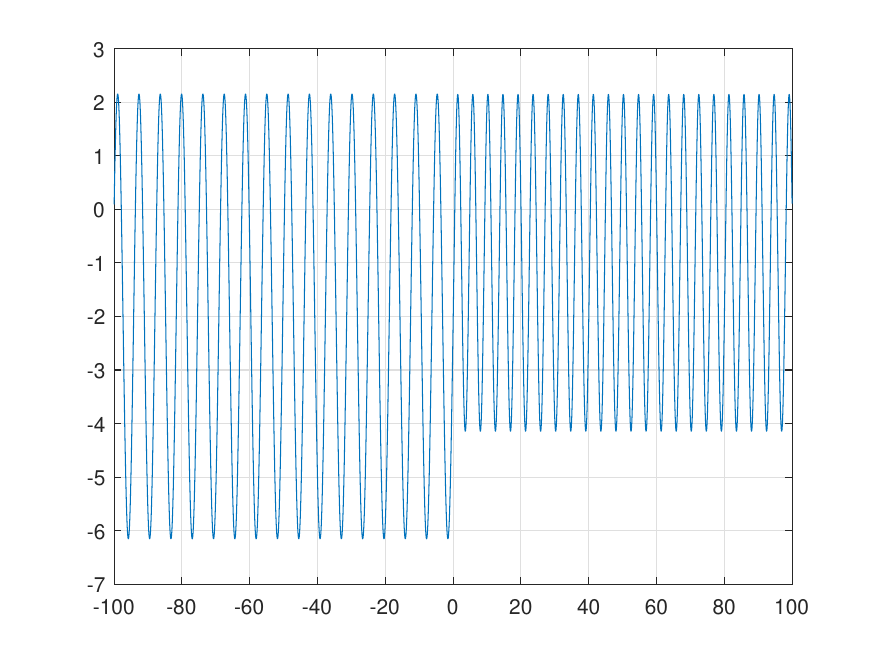}}

  \caption{The profiles of numerical solution for Case 2  with 
  $\alpha=0.5$ and $k^2=2$.}\label{FigHelm_piecewise}
\end{figure}

 \subsection{Time-dependent problem} 
 Consider the following nonlocal Allen-Cahn equation:
\begin{equation}\label{NAC}
\begin{cases}
 u_t(x,t)  + \epsilon^2 \mathcal{L}_{\delta} u(x,t) + f(u) = 0,\quad & \text{on}\ \Omega = (a, b),\quad 0<t\leq T,\\[4pt]
u(x,t) = 0,\quad & \text{on}\ \Omega_{\mathcal{\delta}} = [a-\delta, a] \cup [b, b+\delta],\quad 0<t\leq T,\\[4pt]
u(x,0)=u_0(x) ,\quad & \text{on}\ \Omega = (a, b).
\end{cases}
\end{equation}
where $u_0(x)$ is the initial data, $f(u)$ is given function on $\R$ satisfies $f(u)=F'(u)$ with $F(u)=\frac{1}{4}(u^2-1)^2$. In the above, $\mathcal{L}_{\delta}$ is the nonlocal operator \eqref{NonLOper} and $\epsilon$ is a constant.

Let $N$ be a positive integer and $\tau = T/N$ be a time step size. We can get the time grid $ t_n=n\tau, 0\leq n \leq N$ and use $u_h^n\in \mathcal{V}_h^0$ to approximate $u_h(x,t_n)$, $n=0,1,\cdots,N$. Using backward Euler method to approximate $u_t(x,t_n)$, we can get
\begin{equation*}
u_t(x,t_n)\approx \delta_tu_h^n = \frac{1}{\tau}(u^n_{h}-u^{n-1}_{h}),\quad x\in\Omega,\quad 1\leq n \leq N.
\end{equation*}

Here, we employ the fully discrete scheme with the semi-implicit scheme in time and FEM discretization in space for solving the problem \eqref{NAC}: find $u_h^n \in H^1(0,T;\mathcal{V}_h^{0}(\Omega))$, such that
\begin{equation}\label{SemiImplicitScheme}
\begin{cases}
  (\delta_tu_h^{n},v_{h}) +  \epsilon^2 \mathcal{A}_\delta\left(u_{h}^{n}, v_{h}\right) + (f(u_h^{n-1}),v_{h}) = 0,~\forall\, v_{h}\in \mathcal{V}_h^{0}(\Omega),\quad 1\leq n \leq N,\\
  (u_h^0,v_h) = (u_0(x),v_h),\quad \forall\, v_h\in \mathcal{V}_h^{0}(\Omega).
\end{cases}
\end{equation}
Then, it is easy to obtain the matrix form of \eqref{SemiImplicitScheme} as follows:
\begin{equation}\label{MatrixEquofSemiImpli}
  (\bs{M}  + \tau \epsilon^2\bs{S_{\delta}}) \bs{U^{n}} =  \bs{M} \bs{U^{n-1}} - \tau \bs{F^{n-1}},
\end{equation}
where $\bs{M}$ is the usual (tridiagonal) FEM mass matrix, the stiffness matrix $\bs{S_{\delta}}$ is defined in \eqref{stiffenty}, and $\bs{F^{n-1}}$ is defined as
\begin{equation*}
  \bs F^{n-1} = \int_{\Omega}f(u_h^{n-1})\bs \Phi\, \rd x,\;\;\;\bs \Phi = \big(\varphi_1(x),\varphi_2(x),\cdots,\varphi_{N-1}(x)\big)^{\top}.
\end{equation*}

In this example, we adopt the proposed fully discrete scheme \eqref{SemiImplicitScheme} to simulate the phase evolution behavior and energy dissipation. We take $\Omega = (-1,1)$, $\epsilon = 0.01$, the initial date $u_0(x)= e^{-100x^2}$, and take the fractional kernel function of the form \eqref{fracker0}. 

\begin{figure}
	\begin{minipage}[t]{0.5\linewidth}
		\centering
		\includegraphics[width=0.96\textwidth]{ 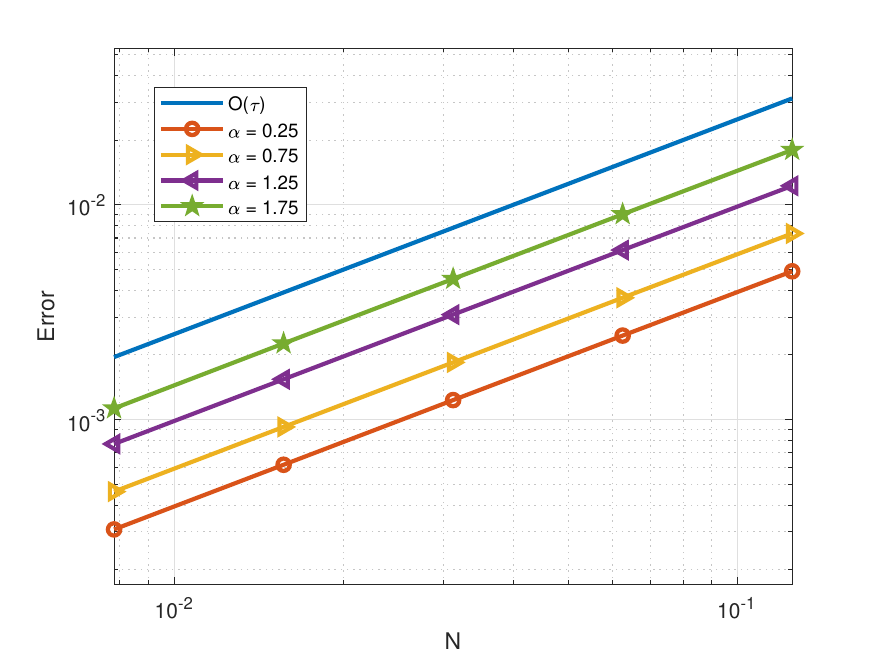}
		\centerline{(a)\,\,$\delta =0.01$.}
	\end{minipage}%
	\begin{minipage}[t]{0.5\linewidth}
		\centering
		\includegraphics[width=0.96\textwidth]{ 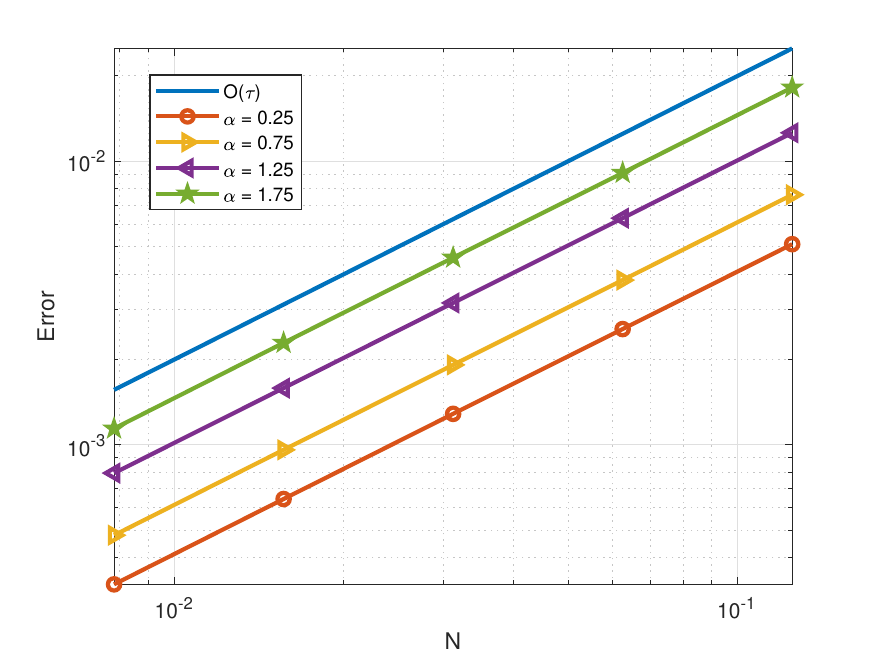}
	  \centerline{(b)\,\,$\delta =0.1$.}
	\end{minipage}
	\begin{minipage}[t]{0.5\linewidth}
		\centering
		\includegraphics[width=0.96\textwidth]{ 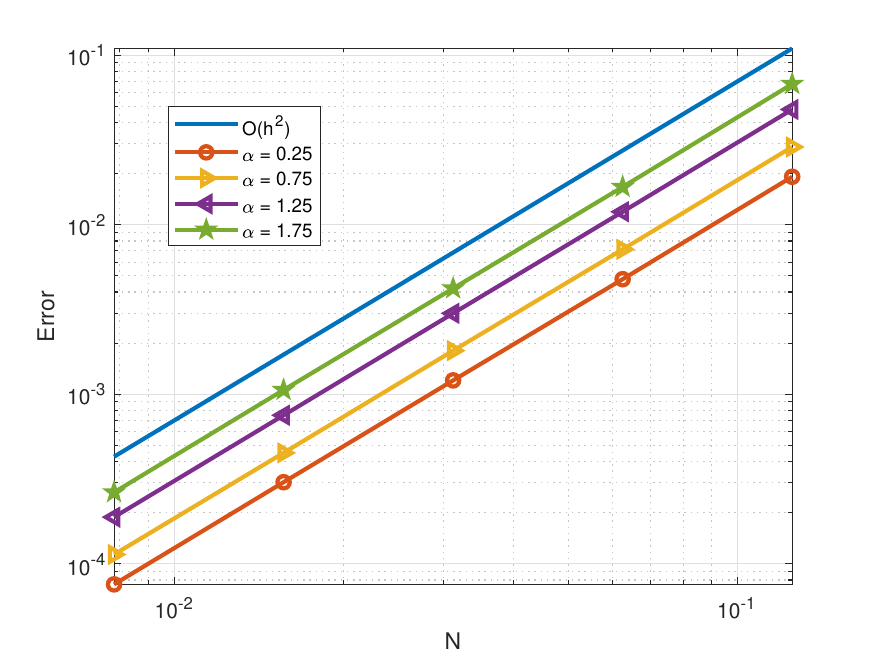}
		\centerline{(c)\,\,$\delta =0.01$.}
	\end{minipage}%
	\begin{minipage}[t]{0.5\linewidth}
		\centering
		\includegraphics[width=0.96\textwidth]{ 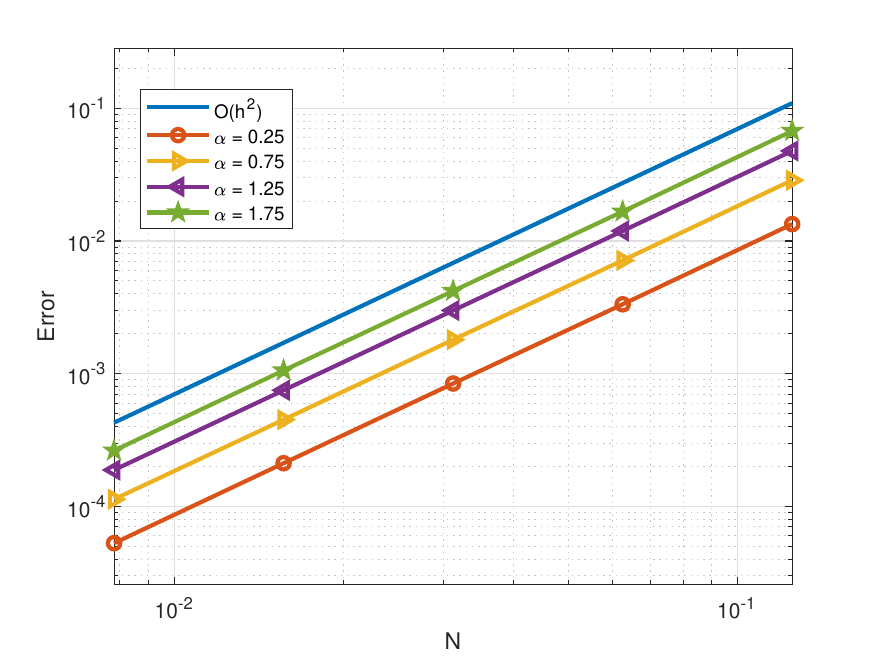}
		\centerline{(d)\,\,$\delta =0.1$.}
	\end{minipage}
	\caption{Convergence order of FEM with different $\delta$. (a)-(b) are the time convergence order, (c)-(d) are the spatial convergence order.}
    \label{Cov}
\end{figure}

In Figure \ref{Cov}, we plot the numerical errors and the corresponding convergence rates of the proposed method max errors for different $\alpha$ and $\delta$. As we can see the temporal convergence rate is $O(\tau)$, and the space convergence rate is $O(h^2)$.
Moreover, we display the waveform of the numerical solution at the different $\delta$ and time, see Figures \ref{Waveforms}. The dates indicate that the $\delta$ affects the propagation velocity of the solitary wave. For a smaller $\delta$, the propagation of the soliton becomes slower, thus indicating the presence of quantum sub-diffusion.
Finally, we plot the evolution of maximum value at various times with different $\alpha$ and $\delta$ in Figure \ref{Evofmax}, which shows that the maximum principle is preserved numerically. We observe from Figure \ref{Evofmax} (d) that the maximum value of the steady state is increased as $\alpha$ increases. 

\begin{figure}[htbp]
  \centering
  \subfigure[$\delta = 0.02$]{
  \includegraphics[height=5cm,width=0.45\columnwidth]{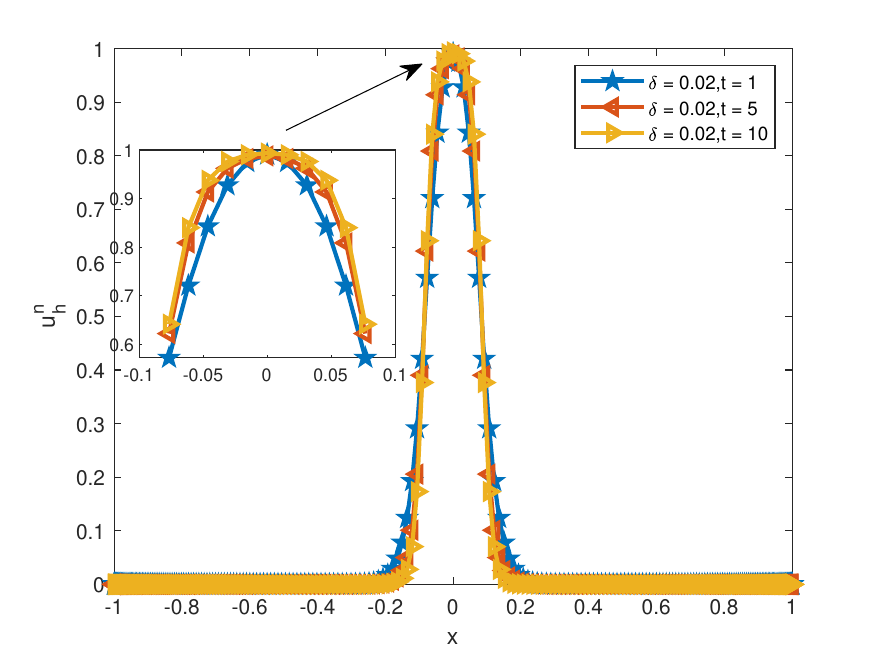}}
  \quad
  \subfigure[$\delta = 0.2$]{
  \includegraphics[height=5cm,width=0.45\columnwidth]{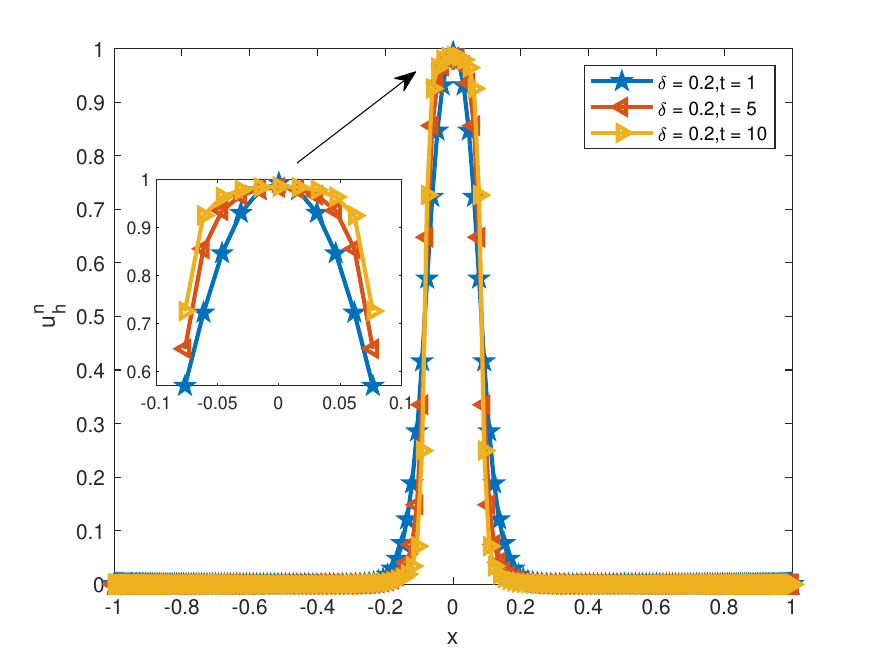}}
  \quad
  \subfigure[$\delta = 2$]{
  \includegraphics[height=5cm,width=0.45\columnwidth]{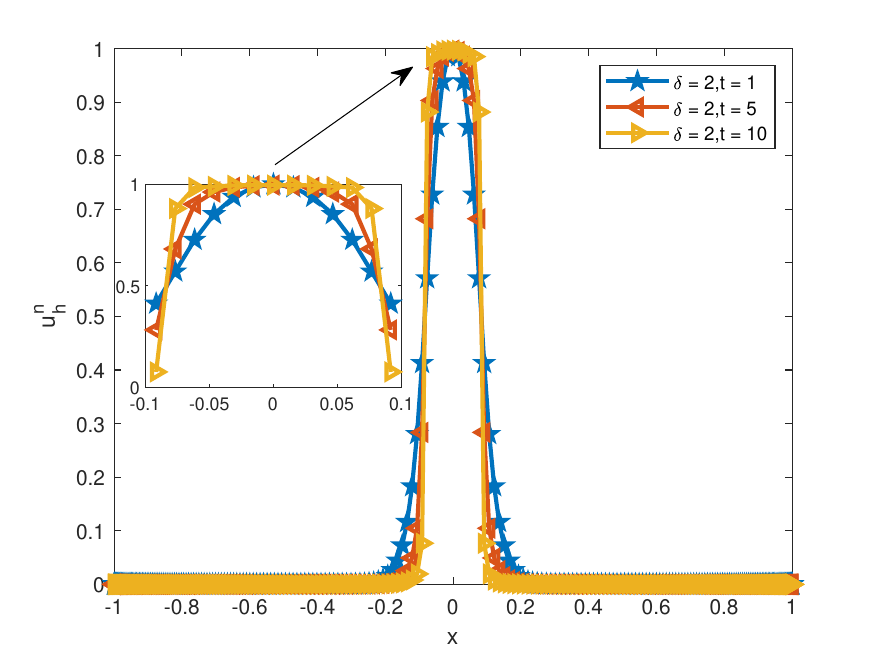}}  
  \quad
  \subfigure[$ t= 10$]{
  \includegraphics[height=5cm,width=0.45\columnwidth]{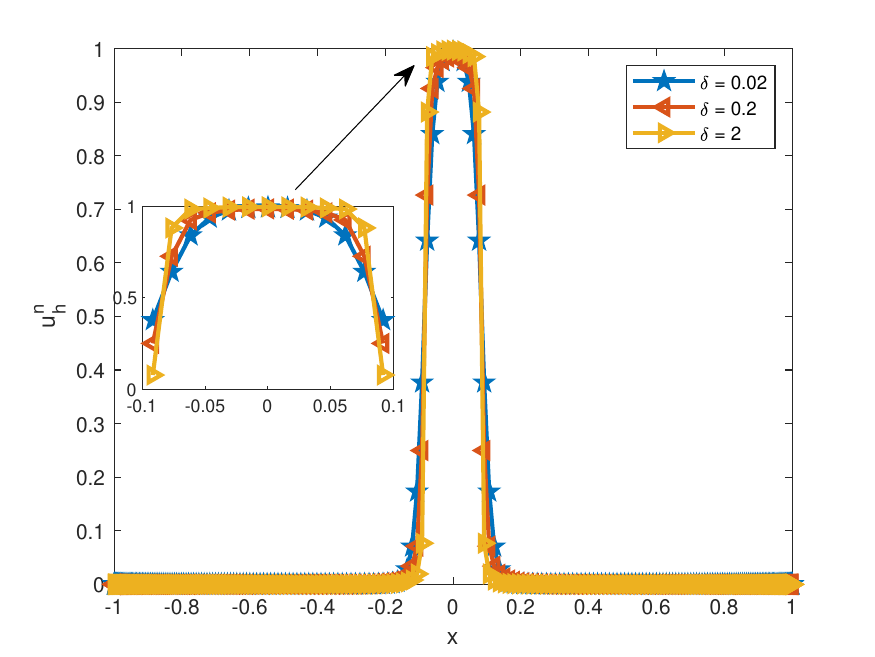}} 
  \caption{ The waveforms of the numerical solution with the $\alpha = 1.25$ on nonuniform meshes. (a)-(c) are the snapshots of $u(x)$ are taken at $t = 1, 5, 10$. (d) is the snapshots of $u(x)$ are taken at $t = 10$ for different $\delta$.}
  \label{Waveforms}
\end{figure}

\begin{figure}[htbp]
 \centering
 \begin{minipage}{0.48\linewidth}
  \centering
  \includegraphics[width=0.9\linewidth]{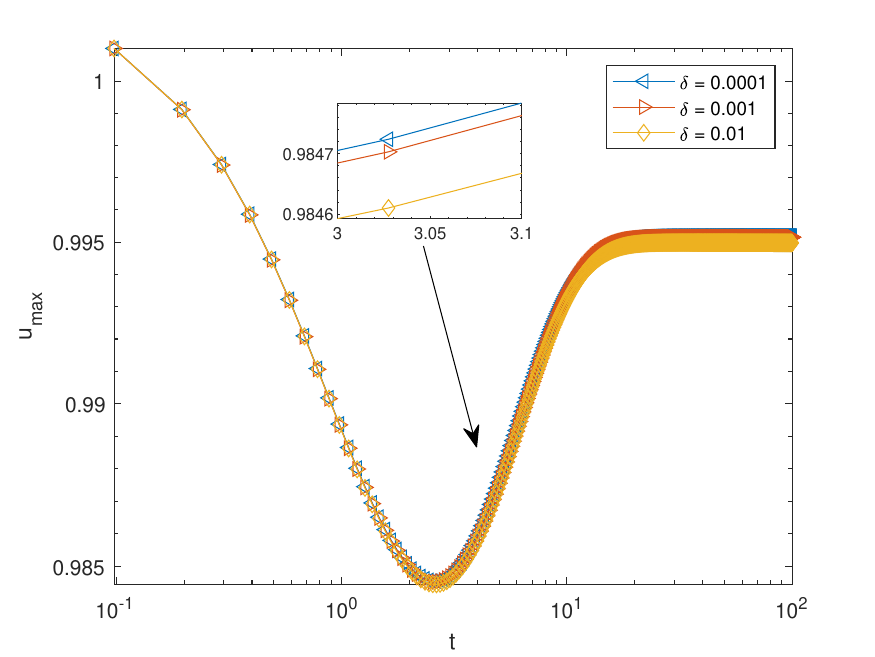}
  \centerline{(a)\,\,$\alpha = -0.25$.}
 \end{minipage}
 \begin{minipage}{0.48\linewidth}
  \centering
  \includegraphics[width=0.9\linewidth]{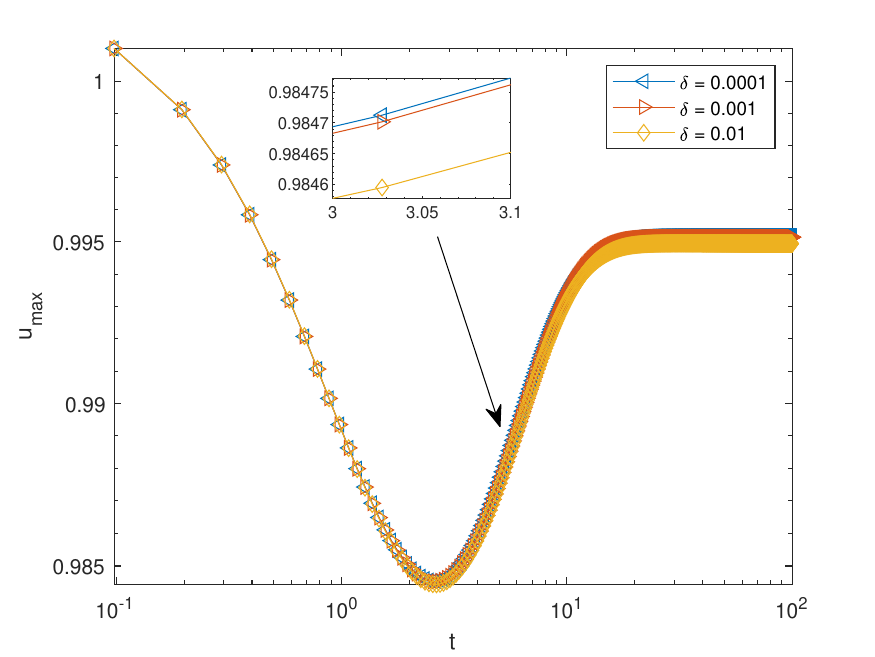}
  \centerline{(b)\,\,$\alpha = 0.25$.}
 \end{minipage}

 \begin{minipage}{0.48\linewidth}
  \centering
  \includegraphics[width=0.9\linewidth]{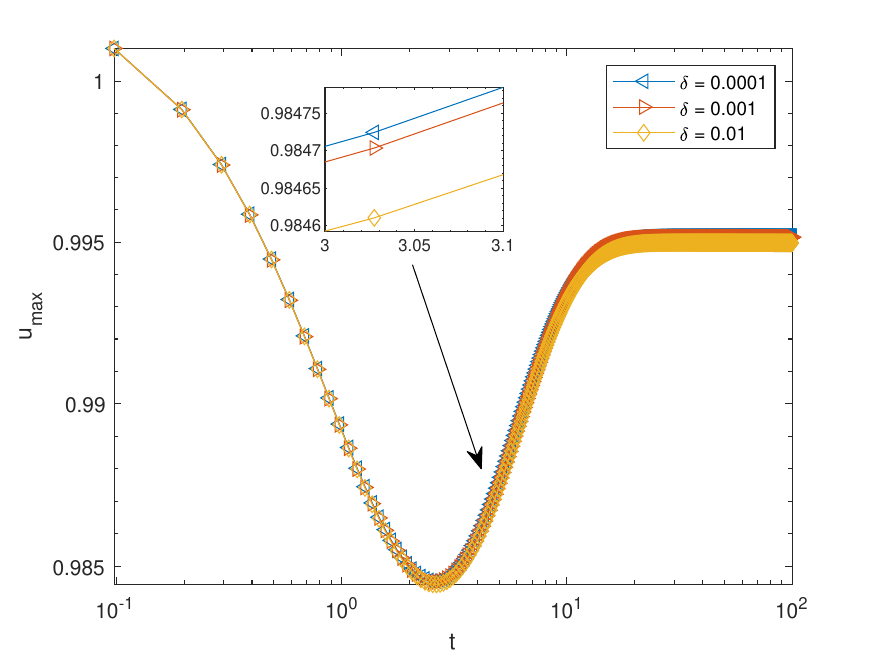}
  \centerline{(c)\,\,$\alpha = 0.75$.}
 \end{minipage}
 \begin{minipage}{0.48\linewidth}
  \centering
  \includegraphics[width=0.9\linewidth]{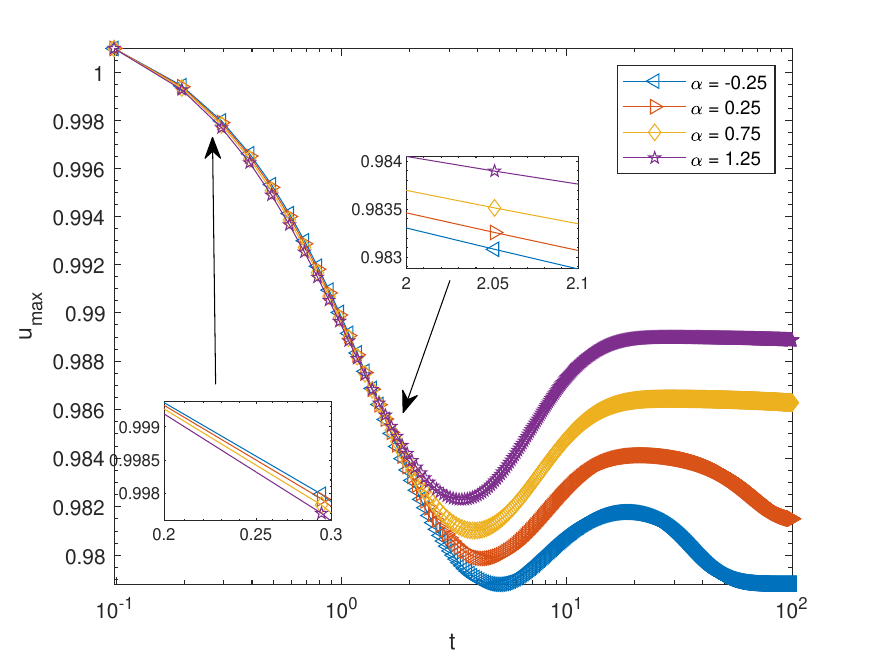}
        \centerline{(d)\,\,$\delta = 0.1$.}
 \end{minipage}

    \caption{Evolution of maximum value as $\delta$ increases on nonuniform meshes. (a)-(c) are the maximum value at $\alpha = -0.25, 0.25, 0.75$. (d) is the maximum value at $\delta=0.1$ for different $\alpha$.}
    \label{Evofmax}
\end{figure}

 \section{Concluding remarks and discussions}
 Different from the implementation of FEM in the physical space, we evaluated the nonlocal stiffness matrix of piecewise linear FEM for the nonlocal operator in the Fourier-transformed domain, and derived the explicit integral form of the entries on non-uniform meshes. This allowed us to adopt the different meshes for different situations, and enabled us to resort to the Gauss quadrature for one dimension integral accurately. Plenty of numerical examples for solving the nonlocal models are presented to demonstrate the accuracy and efficiency of the proposed method.

 The current study is largely based on a simple one-dimensional linear model for the sake of offering insight without being impeded by tedious calculations. More careful studies on the stability and convergence analysis of piecewise linear FEM on non-uniform meshes, and one may extend the study to two-dimensional rectangular elements, but it is  more involved, which we shall report in a future work.

\vspace{18pt}

\noindent {\LARGE \bf Declarations}

\vspace{12pt}

\noindent{\bf Conflict of interest}\;\;
 The authors declare that they have no conflict of interest.

\vspace{18pt}


\begin{thebibliography}{1}

\bibitem{AbramovitzS64}
M.~Abramovitz and I. A. Stegun,
\newblock {\em {Handbook of Mathematical Functions}},
\newblock Dover, New York, 1972.

\bibitem{Acosta2017}
G. Acosta and J. P. Borthagaray,
\newblock {\em {A fractional Laplace equation: regularity of solutions and finite element approximations}},
\newblock SIAM J. Numer. Anal., 55(2017)472--495.

\bibitem{Ainsworth2017}
 M. Ainsworth and C. Glusa,
\newblock {\em {Aspects of an adaptive finite element
  method for the fractional Laplacian: a priori and a posteriori error estimates, efficient implementation and multigrid solver}},
\newblock Comput. Methods Appl. Mech. Engrg., 327(2017)4-35.

 \bibitem{Akso2014}
B. Aksoylu and Z. Unlu,
\newblock {\em {Conditioning analysis of nonlocal integral operator in fractional Sobolev spaces}},
\newblock SIAM J. Numer. Anal., 52(2014)653-677.

\bibitem{askari2008peridynamics}
E. Askari, F. Bobaru,  R. B. Lehoucq, M. L. Parks, S. A. Silling and O. Weckner,
\newblock {\em {Peridynamics for multiscale materials modeling}},
 \newblock Journal of Physics: Conference Series, 2008.

\bibitem{Aulisa2022Efficient}
E. Aulisa, G. Capodaglio, A. Chierici, M. D'Elia, Marta,
\newblock {\em {Efficient quadrature rules for finite element discretizations of nonlocal equations}},
 \newblock Numer. Methods Partial Differential Equations, 38(2022)1767–1793.



 \bibitem{Bonito2018}
 A. Bonito, J. P. Borthagaray, R. H. Nochetto, E. Ot{\'a}rola and A. J. Salgado,
\newblock {\em {Numerical methods for fractional diffusion}},
\newblock Comput. Vis. Sci., 19 (2018)19-46.

 \bibitem{Cao2023}
R. Cao, M. Chen, Y. Qi, J. Shi, X. Yin, 
\newblock {\em {Analysis of (shifted) piecewise quadratic polynomial collocation for nonlocal diffusion model}},
\newblock Appl. Numer. Math. 185(2023)120–140.



 \bibitem{Chen2011}
X. Chen, M. Gunzburger,
\newblock {\em {Continuous and discontinuous finite element methods for a peridynamics model of mechanics}},
\newblock Comput. Methods Appl. Mech. Engrg., 200(2011)1237-1250.

\bibitem{Chen2021non-uniform}
H. Chen, C. Sheng, and L.-L Wang,
\newblock {\em {On explicit form of the FEM stiffness matrix for the integral fractional Laplacian on non-uniform meshes}},
\newblock Appl. Math. Lett., 113(2021)106864.

\bibitem{Gradshteyn2007Table}
 I. S. Gradshteyn and I. M. Ryzhik,
 \newblock {\em {Table of Integrals, Series and Products}}, 
 \newblock Seven Edition, Elsevier, 2007.

 \bibitem{Du2012}
Q. Du, M. Gunzburger, R. Lehoucq and K. Zhou,
\newblock {\em {Analysis and approximation of nonlocal diffusion problems with volume constraints}},
\newblock SIAM Review, 54(2012)667-696.

 \bibitem{Du2013}
Q. Du, M. Gunzburger, R. Lehoucq and K. Zhou,
\newblock {\em {A posteriori error analysis of finite element method for linear nonlocal diffusion and peridynamic models}},
\newblock Math. Comp., 82(2013)1889-1922.

 \bibitem{Du2019}
Q. Du,
\newblock {\em {Nonlocal Modeling, Analysis, and Computation}},
\newblock Society for Industrial and Applied Mathematics, Philadelphia, 2019.

 \bibitem{Du2023}
Q. Du, X. Tian and Z. Zhou
\newblock {\em {Nonlocal diffusion models with consistent local and fractional limits}},
\newblock A³N²M: Approximation, Applications, and Analysis of Nonlocal, Nonlinear Models, The IMA Volumes in Mathematics and its Applications, Springer, Cham. 165(2023)175–213.

\bibitem{Duo2018}
S. Duo, H. van Wyk and Y. Zhang,
\newblock {\em {A novel and accurate finite difference method for the fractional {L}aplacian and the fractional poisson problem}}, 
\newblock J. Comput. Phys., 355(2018)233-252.

 \bibitem{Elia2013}
M. D’Elia, M. Gunzburger,
\newblock {\em {The fractional Laplacian operator on bounded domains as a special case of the nonlocal diffusion operator}},
\newblock Comput. Math. Appl., 66(2013)1245– 1260.

\bibitem{Elia2020}
 M. D'Elia, Q. Du, C. Glusa, M. Gunzburger, X. Tian, and Z. Zhou, 
 \newblock {\em {Numerical methods for nonlocal and fractional models}}, 
 \newblock Acta Numer., 29(2020)1-124.


\bibitem{Di2012}
 E. Di Nezza, G. Palatucci, and E. Valdinoci, 
\newblock {\em {Hitchhiker's guide to the fractional Sobolev spaces}},
\newblock Bull. Sci. Math. 136(2012)521–573.

\bibitem{Fri1972}
 I. Fried, 
\newblock {\em {Condition of finite element matrices generated from nonuniform meshes}},
\newblock AIAA J. 10(1972)219-221.


\bibitem{Guan2022}
Q. Guan, M. Gunzburger, X. Zhang,
\newblock {\em {Collocation method for one dimensional nonlocal diffusion equations}},
\newblock  Numer. Methods Partial Differential Equations, 38(2022)1618–1635.



 \bibitem{Hao2021}
Z. Hao, Z. Zhang and R. Du,
\newblock {\em {Fractional centered difference scheme for high-dimensional integral fractional Laplacian}},
\newblock J. Comput. Phys., 424(2021)109851.



 \bibitem{Huang2014}
Y. Huang, A. Oberman,
\newblock {\em {Numerical methods for the fractional Laplacian: a finite difference-quadrature approach}},
\newblock SIAM J. Numer. Anal., 52(2014)3056-3084.

\bibitem{Klar2023}
M. Klar, G. Capodaglio, M. D'Elia, C. Glusa, M. Gunzburger, Max, and C. Vollmann, 
\newblock {\em {A scalable domain decomposition method for FEM discretizations of nonlocal equations of integrable and fractional type}},
\newblock Comput. Math. Appl. 151(2023)434-448.


\bibitem{Leng2021}
Y. Leng, X. Tian,  N. Trask, J. T. Foster, 
\newblock {\em {Asymptotically compatible reproducing kernel collocation and meshfree integration for nonlocal diffusion}},
\newblock SIAM J. Numer. Anal. 59(2021)88–118.



 \bibitem{Li2021}
H. Li, R. Liu and L.-L. Wang,
\newblock {\em {Efficient Hermite spectral-Galerkin methods for nonlocal diffusion equations in unbounded domains}},
\newblock Numer. Math. Theo. Meth. Appl., 15 (2022)1009-1040.

 \bibitem{Lischke2020}
A. Lischke, G. Pang, M. Gulian, and et~al., 
\newblock {\em {What is the fractional Laplacian? A comparative review with new results}}, 
\newblock J. Comput. Phys., 404(2020)109009.

 \bibitem{Liu2017}
Z. Liu, A. Cheng and H. Wang,
\newblock {\em {An hp-Galerkin method with fast solution for linear peridynamic models in one dimension}},
\newblock Comp. Math. Appl., 73(2017)1546-1565.

\bibitem{liu2020diagonal}
H. Liu, C. Sheng, L.-L. Wang, and H. Yuan,
\newblock {\em {On diagonal dominance of FEM stiffness matrix of fractional Laplacian and maximum principle preserving schemes for fractional Allen-Cahn equation}},
\newblock J. Sci. Comput., 86(2021)19.

\bibitem{Lu2022a}
J. Lu, Y. Nie, 
\newblock {\em {A reduced-order fast reproducing kernel collocation method for nonlocal models with inhomogeneous volume constraints}}, 
\newblock Comput. Math. Appl. 121(2022)52–61.



\bibitem{Mao2017}
 Z. Mao, J. Shen,
 \newblock {\em {Hermite spectral methods for fractional PDEs in unbounded domains}}, 
 \newblock SIAM J. Sci. Comput., 39(2017)A1928-A1950.

 \bibitem{Wang2014}
H. Wang, H. Tian,
\newblock {\em {A fast and faithful collocation method with efficient matrix assembly for a two-dimensional nonlocal diffusion model}},
\newblock Comput. Methods Appl. Mech. Engrg., 273(2014)19-36.

\bibitem{Sheng2020}
C. Sheng, J. Shen, T. Tang, L. L. Wang and H. Yuan,
\newblock {\em {Fast Fourier-like mapped Chebyshev spectral-Galerkin methods for PDEs with integral fractional Laplacian in unbounded domains}},
\newblock SIAM J. Numer. Anal., 58(2020)2435-2464.

\bibitem{Sheng2023}
C. Sheng, L. L. Wang, H. Chen and H. Li,
\newblock {\em {Fast implementation of FEM for integral fractional Laplacian on rectangular meshes}},
\newblock Commun. Comput. Phys., 35(2024).

 \bibitem{Tang2020}
 T. Tang, L.L. Wang, H. Yuan and T. Zhou, 
 \newblock {\em {Rational spectral methods for PDEs involving fractional Laplacian in unbounded domains}}, 
 \newblock SIAM J. Sci. Comput., 42 (2020)A585-A611.

\bibitem{Tian2013}
X. Tian and Q. Du,
\newblock {\em {Analysis and comparison of different approximations to nonlocal diffusion and linear peridynamic equations}},
\newblock SIAM J. Numer. Anal., 51(2013)3458-3482.

\bibitem{Tian2014}
X. Tian and Q. Du,
\newblock {\em {Asymptotically compatible schemes for robust discretization of nonlocal models and their local limits}},
\newblock SIAM J. Numer. Anal., 52(2014)1641-1665.

 \bibitem{Tian2016}
X. Tian, Q. Du and M. Gunzburger,
\newblock {\em {Asymptotically compatible schemes for the approximation of fractional Laplacian and related nonlocal diffusion problems on bounded domains}},
\newblock Adv. Comput. Math., 42(2016)1363–1380.

\bibitem{Trefethen2018ODE}
L. Trefethen, Á. Birkisson, and T. A.  Driscoll,
\newblock {\em {Exploring ODEs}}. 
\newblock Society for Industrial and Applied Mathematics, Philadelphia, PA, 2018.

\bibitem{Ye2023Monotone}
Q. Ye and X. Tian, 
\newblock {\em {Monotone meshfree methods for linear elliptic equations in non-divergence form via nonlocal relaxation}}, 
\newblock J. Sci. Comput. 96 (2023)33.


 
 \bibitem{Zheng2017}
C. Zheng, J. Hu, Q. Du, and J. Zhang,
\newblock {\em {Numerical solution of the nonlocal diffusion equation on the real line}},
\newblock SIAM J. Sci. Comput., 39(2017)A1951-A1968.

 \bibitem{Zhou2010}
K. Zhou and Q. Du,
\newblock {\em {Mathematical and numerical analysis of linear peridynamic models with nonlocal boundary conditions}},
\newblock SIAM J. Numer. Anal., 48(2010)1759-1780.

\end{thebibliography}
\end{document}